\documentclass[12pt,reqno]{amsart}
\usepackage{indentfirst, amssymb, amsmath, amsthm, mathrsfs, setspace, indentfirst, enumerate,  mathrsfs, amsmath, amsthm}
\usepackage[bookmarksnumbered, colorlinks, plainpages]{hyperref}
\usepackage{mathrsfs}
\usepackage{cite}

\newtheorem*{theoA}{Theorem A}
\newtheorem*{theoB}{Theorem B}
\newtheorem*{theoC}{Theorem C}
\newtheorem*{theoD}{Theorem D}
\newtheorem*{theoE}{Theorem E}
\newtheorem*{theoF}{Theorem F}
\newtheorem*{theoG}{Theorem G}
\newtheorem*{theoH}{Theorem H}

\newtheorem*{cor A}{Corollary A}
\newtheorem*{cor B}{Corollary B}

\newtheorem{theo}{Theorem}[section]
\newtheorem{lem}{Lemma}[section]

\newtheorem{prob}{Problem}[section]

\newtheorem{rem}{Remark}[section]

\newcommand{\ol}{\overline}
\newcommand{\be}{\begin{equation}}
\newcommand{\ee}{\end{equation}}
\newcommand{\beas}{\begin{eqnarray*}}
\newcommand{\eeas}{\end{eqnarray*}}
\newcommand{\bea}{\begin{eqnarray}}
\newcommand{\eea}{\end{eqnarray}}

\numberwithin{equation}{section}
\begin{document}
\title[S\MakeLowercase{harp} B\MakeLowercase{ohr} \MakeLowercase{Radii for} S\MakeLowercase{chwarz Functions and.... }]{\LARGE S\MakeLowercase{harp} B\MakeLowercase{ohr} \MakeLowercase{Radii for} S\MakeLowercase{chwarz Functions and } \MakeLowercase{Directional derivative Operators in} $\mathbb{C}^n$}
\date{}
\author[M. B. A\MakeLowercase{hamed}, S. M\MakeLowercase{ajumder} \MakeLowercase{and} D. P\MakeLowercase{ramanik}]{M\MakeLowercase{olla} B\MakeLowercase{asir} A\MakeLowercase{hamed}$^*$, S\MakeLowercase{ujoy} M\MakeLowercase{ajumder} \MakeLowercase{and} D\MakeLowercase{ebabrata} P\MakeLowercase{ramanik}}

\address{Molla Basir Ahamed,
	Department of Mathematics,
	Jadavpur University,
	Kolkata-700032, West Bengal, India.}
\email{mbahamed.math@jadavpuruniversity.in}

\address{Sujoy Majumder, Department of Mathematics, Raiganj University, Raiganj, West Bengal-733134, India.}
\email{sm05math@gmail.com}

\address{Debabrata Pramanik, Department of Mathematics, Raiganj University, Raiganj, West Bengal-733134, India.}
\email{debumath07@gmail.com}

\renewcommand{\thefootnote}{}
\footnote{2010 \emph{Mathematics Subject Classification}: Primary 32A05, 30C80; Secondary 32A10, 41A17.}
\footnote{\emph{Key words and phrases}: Bohr inequality; Holomorphic mappings; Unit polydisc; Directional derivative; Schwarz functions.}
\footnote{*\emph{Corresponding Author}: Molla Basir Ahamed.}

\renewcommand{\thefootnote}{\arabic{footnote}}
\setcounter{footnote}{0}

\begin{abstract}
This paper is devoted to the investigation of multidimensional analogues of refined Bohr-type inequalities for bounded holomorphic mappings on the unit polydisc $\mathbb{P}\Delta(0;1_n)$. We provide a definitive resolution to the Bohr phenomenon in several complex variables by determining sharp radii for functional power series involving the class of Schwarz functions $\omega_{n,m}\in\mathcal{B}_{n,m}$ and the local modulus $|f(z)|$. By employing the directional derivative operator $\partial_uf(z) = \sum_{k=1}^{n} u_k \frac{\partial f(z)}{\partial z_k}$, where $u=(u_1,u_2,\ldots,u_n)\in\mathbb{C}^n$ such that $|u_1|+|u_2|+\ldots+|u_n|=1$, we obtain refined growth estimates for derivatives that generalize well-known univariate results to $\mathbb{C}^n$. The optimality of the obtained constants is rigorously verified, demonstrating that all established radii are sharp.
\end{abstract}
\thanks{Typeset by \AmS -\LaTeX}
\maketitle

\section{\bf Introduction and Preliminaries}
The classical theorem of Harald Bohr \cite{Bohr-PLMS-1914}, first established over a century ago, continues to inspire extensive research on what is now known as the Bohr phenomenon. Renewed interest in this topic emerged in the $1990$s following successful extensions to holomorphic functions of several complex variables and more abstract functional-analytic settings. In particular, in $1997$, Boas and Khavinson \cite{Boas-Khavinson-PAMS-1997} introduced and determined the $n$-dimensional Bohr radius for the family of holomorphic functions bounded by unity on the polydisc (see Section \ref{Sub-Sec-1.3} for a detailed discussion). This seminal contribution stimulated significant interest in Bohr-type problems across a wide range of mathematical disciplines. Subsequent studies have produced further advances regarding the Bohr phenomenon for multidimensional power series, with notable contributions by Aizenberg \cite{Aizen-PAMS-2000, Aizenberg, Aizenberg-SM-2007}, Aizenberg \textit{et al.} \cite{Aizenberg-PAMC-1999, Aizenberg-SM-2005, Aizenberg-Aytuna-Djakov-JMAA-2001}, Defant and Frerick \cite{Defant-Frerick-IJM-2006}, and Djakov and Ramanujan \cite{Djakov-Ramanujan-JA-2000}. A comprehensive overview of the different aspects and generalizations of Bohr's inequality can be found in \cite{Lata-Singh-PAMS-2022, Liu-Pon-PAMS-2021, Ali-Abu-Muhanna-Ponnusamy, Alkhaleefah-Kayumov-Ponnusamy-PAMS-2019, Defant-Frerick-AM-2011, Hamada-IJM-2012, Paulsen-Popescu-Singh-PLMS-2002, Paulsen-Singh-PAMS-2004, Paulsen-Singh-BLMS-2006,
	Ahamed-Ahammed-CMFT-2023,Ahamed-Ahammed-MJM-2024,Ahamed-Allu-Halder-BSM-2025,Ahamed-Allu-Halder-CRM-2025,Ahammed-Ahamed-CMB-2024,Ahammed-Ahamed-CVEE-2024,Ahammed-Ahamed-MJM-2025,Ahammed-Ahamed-Roy-Filomat-2025}, as well as in the monograph by Kresin and Maz'ya \cite{Kresin-1903} and the references therein. In particular, \cite[Section 6]{Kresin-1903}, which is devoted to Bohr-type theorems, highlights promising directions for extending several classical inequalities to holomorphic functions of several complex variables and, more significantly, to solutions of partial differential equations.\vspace{1.2mm}

The transition from the unit disk $\mathbb{D}$ to the polydisc $\mathbb{P}\Delta(0;1_n)$ is not merely a matter of indexing; it involves a fundamental shift in the underlying complex geometry. While the classical Bohr theorem of 1914 provides an elegant bound for power series on the unit disk $\mathbb{D}$, transitioning to the polydisc $\mathbb{P}\Delta(0;1_{n})$ involves a fundamental shift in complex geometry that presents significant Dimensional Shift Limitations. A critical challenge within the realm of Several Complex Variables (SCV) is determining whether univariate refinements?such as those involving Rogosinski radii or area-based estimates?retain their sharpness when subjected to multidimensional operators like the directional derivative. Furthermore, although recent studies have established Bohr-type inequalities for holomorphic functions in the unit disk using Schwarz functions, it has remained an open problem whether these results could be extended to functions defined on the unit polydisc. This paper addresses these gaps by determining sharp multidimensional Bohr radii for functional power series involving the class of Schwarz functions $\omega_{n,m}\in\mathcal{B}_{n,m}$ and the directional derivative operator, providing a definitive resolution to the Bohr phenomenon in the SCV setting. 
\subsection{\bf Classical Bohr inequality and its recent implications}
Let $ \mathcal{B} $ denote the class of analytic functions in the unit disk $\mathbb{D}:=\{{\zeta}\in\mathbb{C} : |{ \zeta}|<1\} $ of the form $ f({ \zeta})=\sum_{k=0}^{\infty}a_k {\zeta}^k $ such that $ |f({ \zeta})|<1 $ in $ \mathbb{D} $.  In the study of Dirichlet series, in $ 1914$, Harald Bohr \cite{Bohr-PLMS-1914} discovered the following interesting phenomenon 
\begin{theoA} If $ f\in\mathcal{B}$, then the following sharp inequality holds:
\begin{align}
\label{b1}	M_f(r):=\sum_{k=0}^{\infty}|a_k|r^k\leq 1\;\;\mbox{for}\;\; |{ \zeta}|=r\leq \frac{1}{3}.
\end{align}
\end{theoA}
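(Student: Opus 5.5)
The plan is the standard coefficient-estimate route. The only nontrivial ingredient is a sharp bound on the coefficients $a_k$ for $k\ge1$ in terms of $|a_0|$.

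\emph{Step 1: coefficient bound.} Write $a=|a_0|<1$; if $f$ is constant the claim is trivial. I will show that $|a_k|\le 1-a^2$ for every $k\ge1$. To get this, compose $f$ with the disk automorphism
\[
\varphi(w)=\frac{w-a_0}{1-\overline{a_0}w},
\]
so that $g=\varphi\circ f$ maps $\mathbb{D}$ into $\mathbb{D}$ and $g(0)=0$.

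For $k=1$, the Schwarz lemma gives $|g'(0)|\le1$, that is $|a_1|\le 1-a^2$. For general $k$, first average over roots of unity. Put $\epsilon=e^{2\pi i/k}$ and
\[
f_k(\zeta)=\frac1k\sum_{j=0}^{k-1} f(\epsilon^j\zeta)=a_0+a_k\zeta^k+a_{2k}\zeta^{2k}+\cdots .
\]
Then $|f_k|<1$, and $f_k(\zeta)=h(\zeta^k)$ with $h\in\mathcal{B}$, $h(0)=a_0$ and $h'(0)=a_k$. Applying the $k=1$ case to $h$ gives $|a_k|\le1-a^2$. An alternative route is Wiener's inequality via the Carathéodory/Herglotz representation of $(1+\varphi\circ f)/(1-\varphi\circ f)$, but the averaging trick is cleaner.

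\emph{Step 2: summation.} For $r\le 1/3$,
\[
M_f(r)\le a+(1-a^2)\sum_{k\ge1} r^k = a+(1-a^2)\frac{r}{1-r}\le a+\frac{1-a^2}{2}.
\]
Next, $a+\tfrac{1-a^2}{2}=1-\tfrac{(1-a)^2}{2}\le 1$. This gives the inequality \eqref{b1} for all $|\zeta|=r\le 1/3$.

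\emph{Step 3: sharpness.} Take $f_a(\zeta)=\dfrac{a-\zeta}{1-a\zeta}$ with $a\in(0,1)$. Then $a_0=a$ and $a_k=-(1-a^2)a^{k-1}$ for $k\ge1$, so
\[
M_{f_a}(r)=a+\frac{(1-a^2)r}{1-ar}.
\]
The inequality $M_{f_a}(r)>1$ is equivalent to $(1-a^2)r>(1-a)(1-ar)$, i.e. $(1+a)r>1-ar$, i.e. $r>\frac{1}{1+2a}$. As $a\to1^-$ the threshold $\frac1{1+2a}$ tends to $1/3$. Hence for any fixed $r>1/3$ some $f_a$ violates the inequality, and $1/3$ cannot be improved.

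\emph{Main obstacle.} The crux is Step 1, namely extracting the sharp constant $1-|a_0|^2$ for every $k$ rather than the cruder bound $|a_k|\le1$. The rest is elementary calculus.
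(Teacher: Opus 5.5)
Your proof is correct. The paper only quotes Theorem A (from Bohr and Wiener) and gives no proof of it, but your argument follows the same route the paper uses for Theorem~\ref{Th-2.3}, which reduces to Theorem A at $t=0$, $n=m=1$: the coefficient bound $|a_k|\le 1-|a_0|^2$ (which you derive via the Schwarz lemma and averaging over rotations, whereas the paper cites it as Lemma~\ref{Lem3}), then geometric summation, then sharpness via the M\"obius maps $f_a$ with $a\to1^-$.
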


Inequality (\ref{b1}), along with the sharp constant $1/3$, is known as the \textit{classical Bohr inequality}, and $1/3$ is referred to as the \textit{Bohr radius} for the family $\mathcal{B}$. Bohr \cite{Bohr-PLMS-1914} originally established this inequality for $r \leq 1/6$, and Wiener subsequently proved that the constant $1/3$ is sharp. Since then, several alternative proofs have appeared (see \cite{Paulsen-Popescu-Singh-PLMS-2002, Paulsen-Singh-BLMS-2006, Sidon-MZ-1927, Tomic-MS-1962}, as well as the survey \cite{Ali-Abu-Muhanna-Ponnusamy} and \cite[Chapter 8]{Garcia-Mashreghi-Ross}). Many of these works employ methods from complex analysis, functional analysis, number theory, and probability, further developing the theory and applications of Bohr's ideas to Dirichlet series. For instance, various multidimensional generalizations of this result have been obtained in \cite{Aizenberg-PAMC-1999, Aizenberg-Aytuna-Djakov-JMAA-2001, Aizenberg, Boas-Khavinson-PAMS-1997, Djakov-Ramanujan-JA-2000, Jia-Liu-Ponnusamy-AMP-2025}.
\vspace{1.2mm}

It is noteworthy that if $|a_0|$ in the Bohr inequality is replaced by $|a_0|^2$, the Bohr radius increases from $1/3$ to $1/2$. Moreover, if $a_0=0$ in Theorem A, the sharp Bohr radius is further improved to $1/\sqrt{2}$ (see, e.g., \cite{Kayumov-Ponnusamy-CMFT-2017}, \cite[Corollary 2.9]{Paulsen-Popescu-Singh-PLMS-2002}, and the recent work \cite{Ponnusamy-Wirths-CMFT-2020} for a more general result). These refinements rely on sharp coefficient estimates of the form\begin{align*}|a_n| \leq 1 - |a_0|^2, \quad n \geq 1, ; f \in \mathcal{B}.\end{align*}Applying these inequalities, Kayumov and Ponnusamy \cite{Kayumov-Ponnusamy-CMFT-2017} observed that the sharp form of Theorem A cannot be obtained in the extremal case $|a_0| < 1$. Nevertheless, a sharp version of Theorem A has been established for each individual function in $\mathcal{B}$ (see \cite{Alkhaleefah-Kayumov-Ponnusamy-PAMS-2019}), as well as for several subclasses of univalent functions (see \cite{Aizenberg, Muhanna-CVEE-2010}).\vspace{1.2mm}

The Bohr--Rogosinski radius serves as a natural analogue to the classical Bohr radius. This notion was originally established by Rogosinski \cite{Rogosinski-1923} within the context of the Schur class $\mathcal{B}$. For a function $f \in \mathcal{B}$, we consider the Bohr--Rogosinski sum $R^f_N(z)$ defined by\begin{align}\label{e-11.12}R^f_N(z) := |f(z)| + \sum_{n=N}^{\infty} |a_n| r^n, \quad |z| = r.\end{align}Note that for the case $N=1$, the expression in \eqref{e-11.12} generalizes the classical Bohr sum by replacing the fixed initial coefficient $|a_0| = |f(0)|$ with the local modulus $|f(z)|$. The functional inequality $R^f_N(z) \leq 1$ constitutes the Bohr--Rogosinski inequality.

Recently, the classical Bohr's inequality is improved in following form.
\begin{theoB}\cite{Kayumov-Ponnusamy-ArXiv-2018} Suppose that $f(z)=\sum_{k=0}^{\infty} a_k z^k$  is analytic in $\mathbb{D}$ and $|f(z)| \le 1$ in $\mathbb{D}$. Then
\begin{enumerate}
	\item[\emph{(a)}] $|f(z)| + \sum_{k=1}^{\infty} |a_k| r^k \le 1 \quad \text{for } |z| = r \le \sqrt{5} - 2 \approx 0.236068.$ The constant $\sqrt{5} - 2$ is the best possible. Moreover,
	\item[\emph{(b)}] $|f(z)|^{2} + \sum_{k=1}^{\infty} |a_k| r^k \le 1 \quad \text{for } |z| = r \le \frac{1}{3}.$ The constant $\frac{1}{3}$ is the best possible.
\end{enumerate}
\end{theoB}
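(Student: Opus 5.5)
The proof of Theorem B combines two classical univariate estimates with an elementary optimisation in $a=|a_0|\in[0,1]$. If $a=1$, then $f$ is a unimodular constant and both sums reduce to $1$, so we may assume $a<1$.

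\emph{Step 1: coefficient bound.} The Wiener-type estimate $|a_k|\le 1-a^2$ for $k\ge1$ gives
\[
\sum_{k=1}^\infty |a_k|r^k\le (1-a^2)\frac{r}{1-r}.
\]

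\emph{Step 2: Schwarz--Pick growth bound.} Applying Schwarz--Pick to $f$ (composition with a disk automorphism) yields
\[
|f(z)|\le \frac{r+a}{1+ar},\qquad |z|=r.
\]
We will use this bound for part (a) and its square for part (b). The main difficulty of the proof is showing that these two separately sharp bounds still combine into a sharp inequality.

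\emph{Part (a).} We need
\[
\frac{r+a}{1+ar}+(1-a^2)\frac{r}{1-r}\le 1 .
\]
Since
\[
1-\frac{r+a}{1+ar}=\frac{(1-a)(1-r)}{1+ar},
\]
dividing by $1-a>0$ reduces the claim to
\[
(1+a)r(1+ar)\le (1-r)^2 .
\]
The left side is increasing in $a$, so the worst case is $a\to1^-$. There the condition becomes $2r(1+r)\le (1-r)^2$, i.e. $r^2+4r-1\le0$, which holds exactly for $r\le\sqrt5-2$.

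\emph{Part (b).} We need
\[
1-\Bigl(\frac{r+a}{1+ar}\Bigr)^2=\frac{(1-a^2)(1-r^2)}{(1+ar)^2}\ \ge\ (1-a^2)\frac{r}{1-r}.
\]
Dividing by $1-a^2$, this reduces to
\[
(1-r)^2(1+r)\ge r(1+ar)^2 .
\]
Again the right side increases with $a$, so the worst case is $a\to1$. There the condition becomes $(1-r)^2\ge r(1+r)$, i.e. $1-3r\ge0$, which is $r\le 1/3$.

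\emph{Sharpness.} We test both constants on the family
\[
f_a(z)=\frac{a-z}{1-az},\qquad a\in(0,1),
\]
at the point $z=-r$. Its coefficients satisfy $|a_k|=(1-a^2)a^{k-1}$ for $k\ge1$, and $|f_a(-r)|=(a+r)/(1+ar)$. Hence
\[
\sum_{k\ge1}|a_k|r^k=\frac{(1-a^2)r}{1-ar}.
\]
Here the sum carries the factor $1-ar$ in place of $1-r$. As $a\to1^-$, repeating the reductions above with this exact sum gives expressions exceeding $1$ whenever $r>\sqrt5-2$ in case (a), resp. $r>1/3$ in case (b). Concretely, we expand each expression minus $1$ to first order in $1-a$ and check that the leading coefficient is positive beyond the stated radius. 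For (a), the first-order coefficient of $(1-a)$ is proportional to $2r(1+r)-(1-r)^2$, which is positive for $r>\sqrt5-2$. For (b), it is proportional to $r(1+r)-(1-r)^2$ up to a positive factor, which is positive for $r>1/3$. This confirms that both constants are best possible.
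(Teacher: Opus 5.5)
Your proof is correct. It is the standard Kayumov--Ponnusamy argument: the growth bound $|f(z)|\le (r+a)/(1+ar)$ and Wiener's bound $|a_k|\le 1-a^2$, then monotonicity in $a$ with worst case $a\to 1^-$ (reducing to $r^2+4r-1\le 0$ and $1-3r\ge 0$), and sharpness via $f_a(z)=(a-z)/(1-az)$ at $z=-r$ with $a\to 1^-$. The paper only quotes Theorem B without proof, but your argument follows exactly the template it uses for Theorems \ref{Th-2.3}--\ref{Th-2.5}: Lemma \ref{Lem1}, Lemma \ref{Lem3}, reduction in $|a_0|$, and the extremal family \eqref{Eq-44.9}.
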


Motivated by the work of Kayumov and Ponnusamy \cite{Kayumov-Ponnusamy-CRMASP-2018}, Bohr-type inequalities for the family $\mathcal{B}$ were investigated in \cite{Liu-Shang-Xu-JIA-2018}. This investigation considered several forms where the Taylor coefficients in the classical Bohr inequality are partially or entirely replaced by higher-order derivatives of $f$. We recall only one of these results here.
\begin{theoC} \cite[Theorem 2.1]{Liu-Shang-Xu-JIA-2018} Suppose that $f\in \mathcal{B}$ and $f({ \zeta})=\sum_{n=0}^{\infty} a_n \zeta^n.$ Then the following sharp inequality holds:
\begin{align*}
|f({ \zeta})|+|f'({ \zeta})|r+\sum_{n=2}^{\infty} |a_n|r^n\leq 1\quad \text{for}\;|{ \zeta}|=r\leq \frac{\sqrt{17}-3}{4}.
\end{align*}
\end{theoC}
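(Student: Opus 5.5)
We prove Theorem C by bounding each of the three terms in terms of $a:=|a_0|\in[0,1)$ and $r$, and then reducing the claim to a one–variable inequality in $a$. If $|a_0|=1$, $f$ is a unimodular constant and the inequality is trivial, so we assume $a<1$. We use three standard estimates for $f\in\mathcal{B}$. The first is the Schwarz--Pick lemma in the form
\begin{align*}
|f(\zeta)|\le \frac{r+a}{1+ar},\qquad |\zeta|=r,
\end{align*}
valid because $t\mapsto (r+t)/(1+rt)$ is increasing. The second is the derivative bound
\begin{align*}
|f'(\zeta)|\le \frac{1-|f(\zeta)|^2}{1-r^2}.
\end{align*}
The third is the coefficient bound $|a_n|\le 1-a^2$ for $n\ge1$, which gives $\sum_{n\ge2}|a_n|r^n\le (1-a^2)\,r^2/(1-r)$.

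Set $x=|f(\zeta)|$. Then
\begin{align*}
|f(\zeta)|+|f'(\zeta)|r\le x+\frac{r(1-x^2)}{1-r^2}=:\Phi(x).
\end{align*}
Since $\Phi'(x)=1-2rx/(1-r^2)>0$ whenever $r\le \sqrt2-1$, and $(\sqrt{17}-3)/4\approx0.28<\sqrt2-1$, the function $\Phi$ is increasing on $[0,1]$ in our range. We may therefore substitute $x\le (r+a)/(1+ar)$. A short computation gives $1-x^2\le (1-a^2)(1-r^2)/(1+ar)^2$, so
\begin{align*}
|f(\zeta)|+|f'(\zeta)|r+\sum_{n\ge 2}|a_n|r^n \le \frac{r+a}{1+ar}+\frac{r(1-a^2)}{(1+ar)^2}+\frac{(1-a^2)r^2}{1-r}.
\end{align*}
We must show the right-hand side is at most $1$. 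Using $1-\frac{r+a}{1+ar}=\frac{(1-a)(1-r)}{1+ar}$ and dividing by $(1-a)>0$, this becomes
\begin{align*}
\frac{r(1+a)}{(1+ar)^2}+\frac{(1+a)r^2}{1-r}\le \frac{1-r}{1+ar}.
\end{align*}

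The main obstacle is to prove this for every $a\in[0,1)$ and to see that the worst case is $a\to1^-$. We would check monotonicity in $a$: the left-hand side is increasing in $a$ and the right-hand side is decreasing, so the inequality is hardest at $a\to1$. There it reads
\begin{align*}
\frac{2r}{(1+r)^2}+\frac{2r^2}{1-r}\le \frac{1-r}{1+r}.
\end{align*}
After multiplying out, this is a polynomial inequality in $r$ whose relevant root should be $r=(\sqrt{17}-3)/4$, the positive root of $2r^2+3r-1=0$. We expect this algebra to factor; if it does not, the radius would have to be identified through the crude bound in which $x\to1$ in $\Phi$ and the coefficients are bounded by $1-a^2$, and the computation reorganized accordingly.

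For sharpness we take $f(\zeta)=(a-\zeta)/(1-a\zeta)$ evaluated at $\zeta=-r$. Then $|f|=(a+r)/(1+ar)$ and $|f'(-r)|=(1-a^2)/(1+ar)^2$, and every coefficient satisfies $|a_n|=(1-a^2)a^{n-1}$. Letting $a\to1^-$, the quotient of the excess by $(1-a)$ tends to the limiting inequality above. For $r$ beyond the root this excess is positive, so the sum exceeds $1$ for $a$ close to $1$, and the radius $(\sqrt{17}-3)/4$ cannot be improved.
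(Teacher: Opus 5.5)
Your proposal is correct and is essentially the paper's argument: the paper only quotes Theorem C, but its proof of Theorem~\ref{Th-2.4} with $n=m=1$, $\omega(z)=z$, $\lambda=1$ runs exactly as yours does. Both use Schwarz--Pick for $|f|$ and $|f'|$, monotonicity of $x+A(1-x^2)$ for $A\le 1/2$ as in Lemma~\ref{Lem6}, the bound $|a_n|\le 1-a^2$, reduction to $a\to 1^-$, and the automorphism $(a-\zeta)/(1-a\zeta)$ at $\zeta=-r$ for sharpness.

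The step you left hedged does close. At $a=1$, clearing denominators gives $2r^4+3r^3+r^2+3r-1=(2r^2+3r-1)(r^2+1)\le 0$, which is the paper's $\xi$ at $\lambda=1$. Also, the middle step needs the identity $1-x_0^2=(1-a^2)(1-r^2)/(1+ar)^2$ at $x_0=(r+a)/(1+ar)$; it is not an upper bound on $1-|f(\zeta)|^2$, which would go the wrong way.
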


In $2022$, Wu \textit{et al.} \cite{Wu-Wang-Long-2022} obtained a Bohr-type inequality, formulated in terms of convex combinations of the coefficients of the power series terms. Motivated by this approach, we recall Theorem D below, which generalizes the classical Bohr inequality via a convex combination framework.

\begin{theoD}\cite[Theorem 3.1]{Wu-Wang-Long-2022}
Suppose that $ f(z)=\sum_{n=0}^{\infty}a_nz^n $ is analytic in the unit disk $ \mathbb{D} $ and $ |f(z)|<1 $ in $ \mathbb{D} $.
Then for arbitrary $t\in[0,1]$, it holds that
\begin{align*}
t|f(z)|+(1-t)\sum_{k=0}^{\infty}|a_{k}|\, r^{k} \le 1 \quad \text{for } r\le R_1,
\end{align*}
where the radius 
\begin{align*}
R_1=
\begin{cases}
\dfrac{1-2\sqrt{1-t}}{4t-3}, & t\in\left[0,\dfrac34\right)\cup\left(\dfrac34,1\right],\\[2ex]
\dfrac12, & t=\dfrac34.
\end{cases}
\end{align*}
is the best possible.
\end{theoD}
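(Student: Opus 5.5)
We would reduce the inequality to a one-variable extremal problem in $a:=|a_0|\in[0,1)$, using the two classical sharp estimates for $f\in\mathcal{B}$. These are the Schwarz--Pick type bound
\[
|f(z)|\le \frac{r+a}{1+ar}\quad (|z|=r)
\]
and the Wiener coefficient bound $|a_k|\le 1-a^2$ for $k\ge1$. Together they give
\[
t|f(z)|+(1-t)\sum_{k=0}^{\infty}|a_k|r^k\le \Phi(a):=t\,\frac{r+a}{1+ar}+(1-t)\Big(a+(1-a^2)\frac{r}{1-r}\Big),
\]
so it suffices to show $\Phi(a)\le1$ for all $a\in[0,1]$ whenever $r\le R_1$.

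The key algebraic step is to factor out $1-a$, using $\Phi(1)=1$. Since $1-\frac{r+a}{1+ar}=\frac{(1-a)(1-r)}{1+ar}$ and $1-a-(1-a^2)\frac{r}{1-r}=(1-a)\frac{1-2r-ar}{1-r}$, we obtain
\[
1-\Phi(a)=\frac{(1-a)\,G(a)}{(1+ar)(1-r)},\qquad G(a):=t(1-r)^2+(1-t)(1+ar)(1-2r-ar).
\]
Here $G$ is a concave quadratic in $a$ (leading coefficient $-(1-t)r^2$), so its minimum over $[0,1]$ is attained at an endpoint. We have
\[
G(0)-G(1)=(1-t)\big[(1-2r)-(1+r)(1-3r)\big]=3(1-t)r^2\ge0,
\]
so the binding case is $a=1$. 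There
\[
G(1)=t(1-r)^2+(1-t)(1-2r-3r^2)=1-2r+(4t-3)r^2.
\]
The smallest positive root of $1-2r+(4t-3)r^2$ is $\frac{1-\sqrt{4-4t}}{4t-3}=\frac{1-2\sqrt{1-t}}{4t-3}$, with the linear case $t=3/4$ giving $r=1/2$. Hence $G\ge0$ on $[0,1]$ exactly when $r\le R_1$, which proves the inequality.

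For sharpness we would take $f_a(z)=\frac{a-z}{1-az}$ with $a\in(0,1)$ and evaluate at $z=-r$. Then $|f_a(-r)|=\frac{a+r}{1+ar}$, $a_0=a$ and $|a_k|=(1-a^2)a^{k-1}$, so the left-hand side equals
\[
t\frac{a+r}{1+ar}+(1-t)\Big(a+\frac{(1-a^2)r}{1-ar}\Big).
\]
The same factorization shows that $1-(\text{LHS})$ equals $(1-a)$ times a quantity tending, as $a\to1^-$, to a positive multiple of $1-2r+(4t-3)r^2$. For $r>R_1$ this limit is negative, so the expression exceeds $1$ for $a$ close to $1$. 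The main obstacle is merely bookkeeping: verifying that $a=1$ rather than $a=0$ is the worst endpoint (the computation above), and handling the sign of $4t-3$ uniformly so that $R_1$ is the correct root in both regimes. In both regimes $R_1\in(0,1)$ and the quadratic is positive on $[0,R_1)$.
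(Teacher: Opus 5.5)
Your proof is correct and follows the paper's route. The paper never proves Theorem D itself; it only recovers it as the case $n=m=1$, $\omega(z)=z$ of Theorem \ref{Th-2.3}, and that proof uses the same Schwarz--Pick and coefficient bounds, the same worst case $a\to1^-$ governed by $1-2r+(4t-3)r^2$, and the same extremal function $f_a(z)=(a-z)/(1-az)$ at $z=-r$ for sharpness. The only difference is cosmetic: you factor out $(1-a)$ explicitly, whereas the paper works with the cubic $\alpha(x)=-(1-x)G(x)$ and uses $\alpha''\le 0$, $\alpha(1)=0$ and $\alpha'(1)=G(1)\ge 0$.
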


In the same work, Wu \textit{et al.} \cite{Wu-Wang-Long-2022} derived a result for computing the Bohr-type radius of analytic functions $f(z)$, where the coefficients $|a_0|$ and $|a_1|$ are replaced by the function value $|f(z)|$ and its derivative $|f'(z)|$, respectively.

\begin{theoE} \cite[Theorem 3.4]{Wu-Wang-Long-2022}
Suppose that $ f(z)=\sum_{n=0}^{\infty}a_nz^n $ is analytic in the unit disk $ \mathbb{D} $ and $ |f(z)|<1 $ in $ \mathbb{D} $.
Then for any $\lambda\in(0,+\infty)$, the inequality
\begin{align*}
|f(z)|+|f'(z)|r+\lambda\sum_{k=2}^\infty |a_k|r^k \le 1 \quad {for}\; r\le R_3,
\end{align*}
where
\begin{align*}
R_3=R_3(\lambda)=
\begin{cases}
r_\lambda, & \lambda\in\left(\dfrac12,+\infty\right),\vspace{1.2mm}\\[6pt]
r_*, & \lambda\in\left(0, \dfrac12\right],
\end{cases}
\end{align*}
and $r_\lambda$ and $r_*\approx 0.3191$ are the unique positive real roots in the interval
$(0,\sqrt{2}-1)$ of the equations
\begin{align*}
2\lambda r^4+(4\lambda-1)r^3+(2\lambda-1)r^2+3r-1=0
\end{align*}
and $r^4+r^3+3r-1=0,$ respectively. Moreover, the radius $R_3$ is best possible.
\end{theoE}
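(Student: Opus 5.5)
The plan is to reduce the inequality to a one-parameter extremal problem in $a:=|a_0|\in[0,1)$ and to analyse that problem. I will use three standard estimates for $f\in\mathcal{B}$:
\begin{enumerate}
\item[(i)] Schwarz--Pick: $|f(z)|\le \frac{a+r}{1+ar}$;
\item[(ii)] Schwarz--Pick for the derivative: $|f'(z)|\le \frac{1-|f(z)|^2}{1-r^2}$;
\item[(iii)] Wiener-type coefficient bounds: $|a_k|\le 1-a^2$ for $k\ge1$, hence $\sum_{k\ge2}|a_k|r^k\le (1-a^2)\frac{r^2}{1-r}$.
\end{enumerate}
In the range $r<\sqrt2-1$ the map $x\mapsto x+r\frac{1-x^2}{1-r^2}$ is increasing on $[0,1]$, since its vertex $x=\frac{1-r^2}{2r}$ exceeds $1$. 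So (i) may be substituted into (ii), and the left-hand side is bounded by
\begin{align*}
\Phi_\lambda(a,r)=\frac{a+r}{1+ar}+\frac{r(1-a^2)}{(1+ar)^2}+\lambda(1-a^2)\frac{r^2}{1-r}.
\end{align*}
Since $1-\frac{a+r}{1+ar}=\frac{(1-a)(1-r)}{1+ar}$, the condition $\Phi_\lambda\le1$, after division by $1-a$, becomes
\begin{align*}
\frac{1-r}{1+ar}\ \ge\ \frac{r(1+a)}{(1+ar)^2}+\lambda(1+a)\frac{r^2}{1-r}.
\end{align*}

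For the first regime I will show that the worst case is $a\to1^-$: the left side decreases in $a$ and the right side increases in $a$. Setting $a=1$ and clearing the denominator $(1+r)^2(1-r)$ gives exactly
\begin{align*}
1-3r+(1-2\lambda)r^2+(1-4\lambda)r^3-2\lambda r^4\ \ge\ 0,
\end{align*}
which is the equation defining $r_\lambda$. I will check that this polynomial is decreasing on $(0,\sqrt2-1)$, so it has a unique root there and the inequality holds precisely for $r\le r_\lambda$. For sharpness I will take the Möbius functions $f_a(z)=\frac{a-z}{1-az}$ at $z=-r$. Their coefficients are $a_k=-(1-a^2)a^{k-1}$, so all three estimates become equalities up to factors of the form $a^{k-1}$, which tend to $1$. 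Letting $a\to1^-$ then shows that any $r>r_\lambda$ fails.

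For the second regime I note that $\lambda=\tfrac12$ in the polynomial above gives exactly $r^4+r^3+3r-1$. Hence $r_{1/2}=r_*$, and the inequality for $\lambda\le\tfrac12$ and $r\le r_*$ follows from the $\lambda=\tfrac12$ case by monotonicity in $\lambda$.

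The main obstacle is sharpness of $r_*$ for $\lambda<\tfrac12$. The Möbius family only shows that $r_\lambda$ is sharp, and $r_\lambda>r_*$ for such $\lambda$. So a different extremal family is needed. There the coefficient mass should be shifted away from the $\lambda$-weighted tail, so that the bound on $|f|+|f'|r$ through (i)--(ii) is not attained simultaneously with (iii). My first attempt will be a product such as $f(z)=z\,\frac{a-z}{1-az}$, or a rotated Möbius map evaluated at a point other than $-r$, optimised over $a$ and the evaluation point. I would then check whether the resulting critical equation collapses to $r^4+r^3+3r-1=0$ independently of $\lambda$. If it does not, the stated radius $r_*$ in this range must be re-examined.
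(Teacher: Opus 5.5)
Your treatment of $\lambda>\tfrac12$, and of the inequality itself for $\lambda\le\tfrac12$, follows the same route as the paper. The paper's proof of Theorem~\ref{Th-2.4} with $n=m=1$, $\omega(z)=z$ is exactly this argument: Schwarz--Pick for $|f(z)|$ and $|f'(z)|$, monotonicity of $x+A(1-x^2)$ when $A=r/(1-r^2)\le\tfrac12$, the bound $|a_k|\le 1-a^2$, the worst case $a\to1^-$, and the M\"obius maps at $z=-r$ for sharpness.

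The gap is the one you flag: sharpness of $r_*$ for $\lambda<\tfrac12$. No extremal family can close it, because that part of the statement is false. Your first-regime argument never uses $\lambda>\tfrac12$. Write
$P_\lambda(r)=2\lambda r^4+(4\lambda-1)r^3+(2\lambda-1)r^2+3r-1=(1-r)(r^2+2r-1)+2\lambda r^2(1+r)^2$.
This is increasing on $(0,\sqrt2-1)$, with $P_\lambda(0)=-1$ and $P_\lambda(\sqrt2-1)=4\lambda(3-2\sqrt2)>0$, so $r_\lambda\in(0,\sqrt2-1)$ for every $\lambda>0$. Hence your bound gives the inequality for all $r\le r_\lambda$ and every $\lambda>0$, and your M\"obius computation shows failure just beyond $r_\lambda$. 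Moreover, $P_\lambda(r_*)=(2\lambda-1)r_*^2(1+r_*)^2<0$ for $\lambda<\tfrac12$, so $r_\lambda>r_*$. For instance $r_{0.1}\approx 0.382$, and $r_\lambda\to\sqrt2-1$ as $\lambda\to0^+$. So the sharp radius is $r_\lambda$ for all $\lambda>0$, $r_*$ is not best possible for $\lambda<\tfrac12$, and the case split at $\tfrac12$ is spurious. Your last paragraph should therefore end with this corrected statement, not with a search over $z\frac{a-z}{1-az}$ or rotated M\"obius maps.

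Two further remarks. First, when you run your argument with $r_\lambda>\tfrac13$ (which happens for small $\lambda$), your claim that $r(1+a)/(1+ar)^2$ increases in $a$ can fail, since its $a$-derivative has the sign of $1-2r-ar$. Instead, multiply the difference of the two sides by $(1+ar)^2$. This gives $1-2r-ar^2-\lambda r^2(1+a)(1+ar)^2/(1-r)$, which is plainly decreasing in $a$ and equals $-P_\lambda(r)/(1-r)$ at $a=1$.

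Second, the paper does not supply the missing extremal either. In Case 2 of the sharpness part of Theorem~\ref{Th-2.4} it asserts $\lim_{a\to1^-}K(a,\lambda,n\mathbf{r}^m)=w(n\mathbf{r}^m)$. But this limit is $\xi(n\mathbf{r}^m)$ (i.e.\ $P_\lambda$), which is negative on $(r_*,r_\lambda)$. The upper bound $K\le w$ with $w>0$ says nothing about the sign of $K$. Your suspicion was justified.
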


Let 
\begin{align*}
B_m = \Bigl\{ \omega \in \mathcal{B} : \omega(0) = \omega'(0) = \cdots = \omega^{(m-1)}(0) = 0, \ \omega^{(m)}(0) \neq 0 \Bigr\}
\end{align*}
be the class of Schwarz functions, where $m\in \mathbb{N}$.\vspace{1.2mm}

Utilizing the class of Schwarz functions $\omega\in B_m$, Hu et al. \cite{Hu-Wang-Long-2022} further extend Theorem D and E as follows.

\begin{theoF}\cite[Theorem 3.1]{Hu-Wang-Long-2022} Suppose that $f \in \mathcal{B}$ with $f(z) = \sum_{k=0}^{\infty} a_k z^k$, $a := |a_0|$,
and let $\omega \in B_m$ for some $m \in \mathbb{N}$. Then for $t \in [0,1)$, we have
\begin{align*}
t\, |f(\omega(z))| + (1-t) \sum_{k=0}^{\infty} |a_k|\, |\omega(z)|^k \le 1
\end{align*}
for $|z| = r \le R_{t,m}$, where
\begin{align*}
R_{t,m} =
\begin{cases}
\sqrt[m]{\dfrac{1-2\sqrt{1-t}}{4t-3}}, & t \in \left[0, \dfrac{3}{4}\right)\cup\left(\dfrac{3}{4},1\right), \\[2mm]
\sqrt[m]{\dfrac{1}{2}}, & t = \frac{3}{4}.
\end{cases}
\end{align*}
The radius $R_{t,m}$ is best possible.
\end{theoF}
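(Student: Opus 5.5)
The plan is to reduce Theorem F to Theorem D (the convex-combination Bohr inequality of Wu \emph{et al.}) by substituting $\zeta=\omega(z)$, and to obtain sharpness from a rotated Möbius map composed with $z^m$.

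\textbf{Step 1 (Schwarz-type growth).} Since $\omega\in B_m$, we have $|\omega(z)|<1$ and $\omega$ vanishes to order $m$ at the origin. Hence $\omega(z)/z^m$ is holomorphic with modulus at most $1$, and
\begin{align*}
|\omega(z)|\le |z|^m=r^m .
\end{align*}

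\textbf{Step 2 (radial monotonicity).} Put $\rho=|\omega(z)|$ and $\zeta=\omega(z)\in\mathbb{D}$. Then $|\zeta|=\rho$, and the expression to be bounded is exactly
\begin{align*}
t|f(\zeta)|+(1-t)\sum_{k\ge0}|a_k|\rho^k .
\end{align*}
Theorem D says this is at most $1$ whenever $\rho\le R_1(t)$, uniformly over all $f\in\mathcal{B}$ and all $\zeta$ with $|\zeta|=\rho$. I should check that Theorem D really gives the bound for every $\rho\le R_1$, not only at $\rho=R_1$. The standard proof combines $|f(\zeta)|\le (a+\rho)/(1+a\rho)$ with $|a_k|\le 1-a^2$. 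This yields a majorant $\Phi(a,\rho)$ that is increasing in $\rho$, and its maximum over $a\in[0,1]$ is at most $1$ exactly when $\rho\le R_1$. So the bound holds on the whole interval. If $r\le R_{t,m}$, then $\rho\le r^m\le R_1(t)$, and the inequality follows.

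\textbf{Step 3 (the case $t=3/4$).} Here $R_1=1/2$, which agrees with the limit of the general formula as $t\to3/4$. It is therefore covered by Step 2 with no extra work.

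\textbf{Step 4 (sharpness).} Take $\omega(z)=z^m$ and
\begin{align*}
f(\zeta)=\frac{a-\zeta}{1-a\zeta}, \qquad a\in(0,1).
\end{align*}
At the point $z=r e^{i\pi/m}$ we get $\omega(z)=-r^m=-\rho$, so $|f(\omega(z))|=(a+\rho)/(1+a\rho)$ while $|a_k|=(1-a^2)a^{k-1}$. The sum becomes
\begin{align*}
t\,\frac{a+\rho}{1+a\rho}+(1-t)\Bigl(a+\frac{(1-a^2)\rho}{1-a\rho}\Bigr).
\end{align*}
For $\rho>R_1(t)$, this exceeds $1$ for a suitable $a$, or as $a\to1^-$; this is precisely the extremal computation behind the sharpness in Theorem D.

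The main obstacle is Step 4. One must verify that the extremal family from Theorem D, with $\rho=r^m$ in place of $\rho$, pushes the sum above $1$ for every $r>R_{t,m}$. This requires expanding near $a=1$ and checking that the sign of the first-order term in $(1-a)$ changes exactly at the root $\rho=\frac{1-2\sqrt{1-t}}{4t-3}$, separately for $t<3/4$ and $t>3/4$. I would try the expansion in $(1-a)$ first. If it degenerates, I would instead optimize over $a$ explicitly.
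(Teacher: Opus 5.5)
Your proof is correct, but the inequality part is organized differently from the paper. The paper does not prove Theorem F itself; it is quoted from Hu--Wang--Long. The paper's own argument for this statement is the case $n=1$ of the proof of Theorem~\ref{Th-2.3}, which works from scratch. With $\rho=r^m\ge|\omega(z)|$, it combines $|f(\zeta)|\le(|a_0|+\rho)/(1+|a_0|\rho)$ with $|a_k|\le 1-|a_0|^2$ into a majorant. This reduces to a cubic $\alpha(x)$ in $x=|a_0|$ with $\alpha(1)=0$ and $\alpha''\le 0$, and the paper concludes from $\alpha'(1)=(4t-3)\rho^2-2\rho+1\ge 0$ for $\rho\le R_1$.

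You instead use Theorem D as a black box at the point $\zeta=\omega(z)$. This is legitimate because Theorem D is asserted for every radius up to $R_1$, so your extra check on the monotonicity of the majorant is not needed. Your reduction makes it transparent that composing with $\omega$ only rescales $\rho$ to $r^m$. Even sharpness transfers formally, since any $\zeta$ with $|\zeta|=r^m$ equals $\omega(z)$ for $\omega(z)=z^m$ and some $|z|=r$. The paper's direct computation, by contrast, is self-contained and carries over to the polydisc, where no analogue of Theorem D is available to cite.

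Your sharpness construction is exactly the paper's (M\"obius map, $\omega(z)=z^m$, a point with $\omega(z)=-r^m$, then $a\to1^-$). The obstacle you anticipate does not materialize. Writing $E$ for your expression,
\[
1-E=(1-a)\Bigl[t\,\frac{1-\rho}{1+a\rho}+(1-t)\,\frac{1-\rho-2a\rho}{1-a\rho}\Bigr].
\]
At $a=1$ the bracket equals $\bigl(1-(1+2s)\rho\bigr)\bigl(1-(1-2s)\rho\bigr)/(1-\rho^2)$ with $s=\sqrt{1-t}\in(0,1]$. Since $1-2s<1$, the second factor is positive for $\rho<1$. Hence the bracket is negative for $1/(1+2s)<\rho<1$, and $E>1$ for $a$ close to $1$, with no case split at $t=3/4$. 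Note also that $R_1=1/(1+2\sqrt{1-t})$ for every $t$, which is the cleanest way to see your Step 3.
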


\begin{theoG}\cite[Theorem 3.4]{Hu-Wang-Long-2022} Suppose that $f \in \mathcal{B}$ with $f(z) = \sum_{k=0}^{\infty} a_k z^k$, $a := |a_0|$, and let $\omega \in B_m$ for some $m \in \mathbb{N}$. Then for $\lambda \in (0,\infty)$, we have
\begin{align*}
|f(\omega(z))| + |f'(\omega(z))|\,|\omega(z)| + \lambda \sum_{k=2}^{\infty} |a_k|\, |\omega(z)|^k \le 1
\end{align*}
for $|z| = r \le R_\lambda$, where
\begin{align*}
R_\lambda =
\begin{cases}
r_\lambda, & \lambda \in \left(\frac{1}{2}, \infty\right), \\[1mm]
r^*, & \lambda \in \left(0, \frac{1}{2}\right],
\end{cases}
\end{align*}
and the radius $R_\lambda$ is best possible. The radii $r_\lambda$ and $r^*$ are the unique positive roots in $\left(0, \sqrt[m]{\sqrt{2}-1}\right)$ of the equations
\begin{align*}
2 \lambda r^{4m} + (4\lambda -1) r^{3m} + (2\lambda -1) r^{2m} + 3 r^m -1 = 0
\end{align*}
and
\begin{align*}
r^{4m} + r^{3m} + 3 r^m -1 = 0,
\end{align*}
respectively.
\end{theoG}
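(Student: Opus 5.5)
The plan for Theorem G is to reduce it to the one-variable estimate underlying Theorem E, using the Schwarz lemma for $\omega\in B_m$. Write $\rho:=|\omega(z)|$. Since $\omega$ vanishes to order $m$ at the origin and $|\omega|<1$, the generalized Schwarz lemma gives $\rho\le |z|^m=r^m$. Put $a=|a_0|$ and use two standard bounds: the Schwarz--Pick type estimate $|f(w)|\le \frac{\rho+a}{1+a\rho}$ for $|w|=\rho$, and the coefficient bound $|a_k|\le 1-a^2$ for $k\ge1$. For the derivative term I would not invoke Schwarz--Pick directly, since it gives only $|f'(w)|\le\frac{1-|f(w)|^2}{1-\rho^2}$. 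Instead I would keep $|f(w)|=:s$ as a variable and bound
\begin{align*}
|f(w)|+|f'(w)|\rho+\lambda\sum_{k\ge2}|a_k|\rho^k\le s+\frac{(1-s^2)\rho}{1-\rho^2}+\lambda(1-a^2)\frac{\rho^2}{1-\rho}.
\end{align*}

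The next step is to maximize over $s\in[0,\frac{\rho+a}{1+a\rho}]$. The map $s\mapsto s+\frac{(1-s^2)\rho}{1-\rho^2}$ increases on $s\le\frac{1-\rho^2}{2\rho}$. This covers the whole admissible range exactly when $\rho\le\sqrt2-1$ and $a$ is not too close to $1$. This is where the threshold $\sqrt[m]{\sqrt2-1}$ in the statement comes from. Substituting $s=\frac{\rho+a}{1+a\rho}$ produces a function $\Phi_\lambda(a,\rho)$, and the condition $\Phi_\lambda\le1$ for all $a\in[0,1)$ becomes a polynomial inequality in $a$ and $\rho$ after clearing denominators. I then check monotonicity of $\Phi_\lambda$ in $\rho$. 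Since $\rho\le r^m$, it suffices to verify the inequality at $\rho=r^m$, that is, exactly the one-variable problem of Theorem E with $r$ replaced by $r^m$.

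I expect the main obstacle to be the supremum over $a$, which produces the split at $\lambda=\frac12$. Expanding $1-\Phi_\lambda$ near $a\to1^-$ (so $1-a=\varepsilon$) and collecting the first-order term in $\varepsilon$ gives the quartic $2\lambda\rho^4+(4\lambda-1)\rho^3+(2\lambda-1)\rho^2+3\rho-1\le0$. For $\lambda\le\frac12$ the extremal behaviour is instead governed by the $\lambda$-free part, giving $\rho^4+\rho^3+3\rho-1\le0$. I would first try to show that the numerator, as a polynomial in $a$, is concave or has a sign-definite derivative on $[0,1]$, so that its minimum occurs at $a\to1$ or at an interior critical point. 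When $\lambda\le\frac12$ the coefficient comparison between the two regimes should favour the $\lambda$-independent bound. Alternatively, I would simply cite the proof of Theorem E with $r\mapsto\rho$ and check that all of its steps hold for any $\rho\in[0,r^m]$.

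For sharpness, take $\omega(z)=z^m$ and $f(z)=\frac{a-z}{1-az}$, and evaluate at $z=r>0$ real with $w=-r^m$ so that all terms add up. Then $|f(w)|=\frac{a+r^m}{1+ar^m}$, $|f'(w)|=\frac{1-a^2}{(1+ar^m)^2}$, and $|a_k|=(1-a)a^{k-1}(1+a)$ for $k\ge1$. For $r>R_\lambda$, letting $a\to1^-$ and expanding to first order in $1-a$ makes the left side exceed $1$. This happens precisely because the corresponding quartic in $r^m$ is positive; in the case $\lambda\le\frac12$ one uses a suitably chosen $a$ instead of the limit. Uniqueness of the roots in $(0,\sqrt[m]{\sqrt2-1})$ follows from the sign change of each quartic between $0$ (value $-1$) and $\sqrt2-1$, together with monotonicity.
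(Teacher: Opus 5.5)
Your route matches the paper's proof of Theorem~\ref{Th-2.4}, which for $n=1$ is exactly Theorem G. The shared steps are $|\omega(z)|\le r^m$, Schwarz--Pick for $|f'|$, monotonicity of $x\mapsto x+A(1-x^2)$ for $A=\rho/(1-\rho^2)\le\frac12$ (Lemma~\ref{Lem6}), $|a_k|\le 1-a^2$, and the M\"obius map with $a\to1^-$ for sharpness. This proves the inequality for every $\lambda$ and sharpness for $\lambda\ge\frac12$.

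The genuine gap is sharpness when $\lambda<\frac12$. Your claim that the extremal behaviour there is ``governed by the $\lambda$-free part'' is false, and the promised ``suitably chosen $a$'' does not exist. Citing Theorem E does not help, since it makes the same case split. Your own reduction shows the problem:
\begin{align*}
\Phi_\lambda(a,\rho)-1=\frac{(1-a)\,G_\lambda(a,\rho)}{(1+a\rho)^2(1-\rho)},\qquad G_\lambda(a,\rho)=(1-\rho)(a\rho^2+2\rho-1)+\lambda(1+a)\rho^2(1+a\rho)^2.
\end{align*}
For $0<\rho<1$, every non-constant coefficient of $G_\lambda$ as a polynomial in $a$ is positive. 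Hence $G_\lambda(a,\rho)<G_\lambda(1,\rho)=\xi_\lambda(\rho):=2\lambda\rho^4+(4\lambda-1)\rho^3+(2\lambda-1)\rho^2+3\rho-1$ for \emph{every} $\lambda>0$. Moreover $\xi_\lambda(\rho)-(\rho^4+\rho^3+3\rho-1)=(2\lambda-1)\rho^2(1+\rho)^2$. So for $\lambda<\frac12$ the root $r_\lambda$ of $\xi_\lambda(r^m)=0$ is strictly larger than $r^*$, and the inequality holds for all $f$ and $\omega$ up to $r_\lambda$. For example, with $\lambda=0.1$ and $m=1$ it holds up to $r\approx0.382$, whereas $r^*\approx0.3191$.

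Consequently no function violates the inequality on $(r^*,r_\lambda]$. The ``best possible'' part of the statement is false for $\lambda\in(0,\frac12)$. The sharp radius is the root of the first quartic for all $\lambda>0$, and it coincides with $r^*$ only at $\lambda=\frac12$.

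The paper's proof fails at the same point. In its sharpness Case~2 it asserts $\lim_{a\to1^-}K(a,\lambda,n\mathbf{r}^m)=w(n\mathbf{r}^m)$. That limit is actually $K(1,\lambda,n\mathbf{r}^m)=\xi(n\mathbf{r}^m)$, as its own Case~1 computes, and the upper bound $K\le w$ it invokes can never show $K>0$.

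Two minor slips in your proposal:
\begin{enumerate}
\item With $\omega(z)=z^m$ and $z=r>0$ one gets $w=r^m$, not $-r^m$. Take $z=re^{i\pi/m}$ (as the paper does) or $\omega(z)=-z^m$.
\item For $\rho\le\sqrt2-1$ the map $s\mapsto s+\frac{(1-s^2)\rho}{1-\rho^2}$ is increasing on all of $[0,1]$, so no restriction on $a$ is needed.
\end{enumerate}
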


\begin{theoH}\cite[Theorem 3.5]{Hu-Wang-Long-2022} Suppose that $f \in \mathcal{B}$ with $f(z) = \sum_{k=0}^{\infty} a_k z^k$, $a := |a_0|$,
and let $\omega \in B_m$ for some $m \in \mathbb{N}$. Then for $\lambda \in (0,\infty)$, we have
\begin{align*}
|f(\omega(z))|^2 + |f'(\omega(z))|\,|\omega(z)| + \lambda \sum_{k=2}^{\infty} |a_k|\, |\omega(z)|^k \le 1
\end{align*}
for $|z| = r \le R_{2,\lambda}$, where
\begin{align*}
R_{2,\lambda} =
\begin{cases}
r_{2,\lambda}, & \lambda \in (1, \infty), \\[1mm]
r_2^*, & \lambda \in (0, 1],
\end{cases}
\end{align*}
and the radius $R_{2,\lambda}$ is best possible. The radii $r_{2,\lambda}$ and $r_2^*$ are the unique positive roots in 
$\left(0, \sqrt[m]{\dfrac{\sqrt{5}-1}{2}}\right)$ of the equations
\begin{align*}
\lambda r^{4m} + (2\lambda -1) r^{3m} + \lambda r^{2m} + 2 r^m -1 = 0
\end{align*}
and
\begin{align*}
r^{4m} + r^{3m} + r^{2m} + 2 r^m - 1 = 0,
\end{align*}
respectively.
\end{theoH}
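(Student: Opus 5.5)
We work at a fixed $z$ with $|z|=r$ and put $\rho:=|\omega(z)|$. Since $\omega\in B_m$, the Schwarz lemma applied to $\omega(z)/z^m$ gives $\rho\le r^m$. The idea is to bound every term by a function of $\rho$ and $a=|a_0|$ alone. The radius then comes from a one-variable polynomial in $\rho=r^m$, which explains why all equations in Theorem H are polynomials in $r^m$.

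\textbf{Step 1: the three standard estimates.} With $w=\omega(z)$ and $x:=|f(w)|$ we use the following facts.
\begin{align*}
x\le \frac{\rho+a}{1+a\rho},\qquad |f'(w)|\le \frac{1-x^2}{1-\rho^2},\qquad |a_k|\le 1-a^2\ (k\ge1).
\end{align*}
The first is the subordination (Schwarz--Pick) bound, the second is the Schwarz--Pick lemma, and the third is Wiener's inequality. The third gives
\begin{align*}
\lambda\sum_{k\ge2}|a_k|\rho^k\le \lambda(1-a^2)\frac{\rho^2}{1-\rho}.
\end{align*}
Hence the left-hand side is at most
\begin{align*}
\Psi(x):=x^2+\frac{\rho}{1-\rho^2}(1-x^2)+\lambda(1-a^2)\frac{\rho^2}{1-\rho}.
\end{align*}
For $\rho\le(\sqrt5-1)/2$ we have $\rho/(1-\rho^2)\le1$, so $\Psi$ is nondecreasing in $x$. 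We may therefore replace $x$ by $(\rho+a)/(1+a\rho)$. This is where the interval $\bigl(0,\sqrt[m]{(\sqrt5-1)/2}\bigr)$ in the statement comes from.

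\textbf{Step 2: reduction to a polynomial.} Substituting this value of $x$ and using $1-x^2=(1-a^2)(1-\rho^2)/(1+a\rho)^2$, the inequality $\Psi\le1$ becomes
\begin{align*}
\Phi_\lambda(a,\rho):=(1-\rho-\rho^2)(1-\rho)-\lambda\rho^2(1+a\rho)^2\ge0 .
\end{align*}
This is decreasing in $a$, so the worst case is $a\to1$. In that limit,
\begin{align*}
-\Phi_\lambda(1,\rho)=\lambda\rho^4+(2\lambda-1)\rho^3+\lambda\rho^2+2\rho-1 .
\end{align*}
This is exactly the polynomial in the statement, evaluated at $\rho=r^m$. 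At $\rho=0$ it equals $-1<0$, and it is positive at $\rho=(\sqrt5-1)/2$. It is increasing there for $\lambda\ge1$, so it has a unique root $r_{2,\lambda}^m$ in this interval. Together with $\rho\le r^m$ and the monotonicity in $\rho$, this proves the inequality for $r\le r_{2,\lambda}$.

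For $\lambda\le1$ the left-hand side is monotone in $\lambda$. Hence the $\lambda=1$ bound applies, giving the root $r_2^*$ of $r^{4m}+r^{3m}+r^{2m}+2r^m-1=0$.

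\textbf{Step 3: sharpness.} Take $\omega(z)=z^m$, $f(z)=(a-z)/(1-az)$ and $z=r$ real. Then $f(\omega(r))=(a-r^m)/(1-ar^m)$ and $|a_k|=(1-a^2)a^{k-1}$. Computing the left-hand side explicitly, one gets $1+(1-a)\,Q_\lambda(r^m)+O((1-a)^2)$, where $Q_\lambda$ is a positive multiple of the polynomial above. So for $r>r_{2,\lambda}$ and $a$ close to $1$ the sum exceeds $1$.

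\textbf{Main obstacle.} I expect the hardest point to be the sharpness of $r_2^*$ for $\lambda<1$. The estimate in Step 2 with the actual $\lambda<1$ would give a larger admissible radius, so optimality of $r_2^*$ cannot come from the $a\to1$ asymptotics alone. I would first check carefully where the extremal configuration lies. I would try $a\to1$ with a non-real point $z$, or the choice $|f(\omega)|$ near $1$ with the derivative term dominating. I would also re-examine whether the stated constant is in fact only a valid (rather than best) radius for $\lambda<1$.
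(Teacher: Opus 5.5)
Your Steps 1 and 2 match the paper's argument for its $n$-dimensional analogue, Theorem~\ref{Th-2.5}, at $n=1$: the bound $|f(\omega)|\le(\rho+a)/(1+a\rho)$, Schwarz--Pick, $|a_k|\le 1-a^2$, monotonicity of $x^2+A(1-x^2)$ for $A\le 1$, the worst case $a\to1$, and comparison with $\lambda=1$ when $\lambda\le 1$. Your algebra leading to $\Phi_\lambda$ and the polynomial is correct. Two things go wrong.

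\textbf{Your sharpness example uses the wrong point.} At $z=r>0$ you have $\omega(z)=\rho=r^m>0$. There $|f_a(\rho)|=(a-\rho)/(1-a\rho)$ and $|f_a'(\rho)|=(1-a^2)/(1-a\rho)^2$, so the left-hand side minus $1$ is
\begin{align*}
(1-a^2)\Bigl[\frac{\lambda a\rho^2}{1-a\rho}-\frac{1-\rho-\rho^2}{(1-a\rho)^2}\Bigr].
\end{align*}
As $a\to1^-$ its sign is that of $(1-\rho)\bigl[\lambda\rho^2(1-\rho)-(1-\rho-\rho^2)\bigr]=P_\lambda(\rho)-4\lambda\rho^3$, where $P_\lambda(\rho)=\lambda\rho^4+(2\lambda-1)\rho^3+\lambda\rho^2+2\rho-1$. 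This is not a multiple of $P_\lambda$, and it is negative at the root of $P_\lambda$. For $\lambda=1$ it changes sign only near $\rho\approx0.555$, not at $\rho\approx0.386$. You need the point where $|f(\omega)|\le(\rho+a)/(1+a\rho)$ is attained, i.e. $\omega(z)=-r^m$, $z=re^{i\pi/m}$; this is the paper's choice $z_j=(-1)^{(2m-1)/m}r$. There the bracket is $\lambda a\rho^2/(1-a\rho)-(1-\rho-\rho^2)/(1+a\rho)^2$, which tends to $P_\lambda(\rho)/\bigl((1-\rho)(1+\rho)^2\bigr)$.

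\textbf{For $\lambda<1$ your suspicion is right: the optimality claim is false.} Write $P_\lambda(\rho)=\lambda\rho^2(1+\rho)^2-(1-\rho)(1-\rho-\rho^2)$.
\begin{itemize}
\item The term $(1-\rho)(1-\rho-\rho^2)=1-2\rho+\rho^3$ decreases for $\rho<\sqrt{2/3}$. Hence $P_\lambda$ is increasing on $[0,(\sqrt5-1)/2]$ for \emph{every} $\lambda>0$, with $P_\lambda(0)=-1$ and $P_\lambda((\sqrt5-1)/2)=\lambda$.
\item So your Step 2 already proves the inequality for $r\le r_{2,\lambda}$ for every $\lambda>0$, and the corrected example shows $r_{2,\lambda}$ is sharp for every $\lambda>0$.
\item Since $P_\lambda-P_1=(\lambda-1)\rho^2(1+\rho)^2<0$ for $\lambda<1$, we get $r_{2,\lambda}>r_2^*$. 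For example, with $m=1$ and $\lambda=1/2$ the inequality holds up to $r\approx0.442$, while $r_2^*\approx0.386$.
\end{itemize}
Thus on $(0,1)$, $r_2^*$ is a valid but not best possible radius, and no other extremal configuration can rescue it. The statement as written cannot be proved there; the correct sharp radius is $r_{2,\lambda}$ for all $\lambda>0$.

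The paper does not handle this correctly either. It omits the sharpness proof of Theorem~\ref{Th-2.5}, deferring to Theorem~\ref{Th-2.4}. In Case 2 of that sharpness argument, it first shows $K(a,\lambda,n\mathbf{r}^m)\le w(n\mathbf{r}^m)$ and then asserts $\lim_{a\to1^-}K(a,\lambda,n\mathbf{r}^m)=w(n\mathbf{r}^m)$. In fact that limit is the $\lambda$-dependent polynomial, and an upper bound cannot establish positivity.
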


For recent developments regarding the Bohr radius for various functional classes, we refer the reader to \cite{Ahamed-RM-2023, Ahamed-Allu-BMMSS-2022, Ahamed-Allu-CMB-2023, Allu-Arora-JMAA-2023, Das-JMAA-2022, Gangania-Kumar-MJM-2022, Hamada-AAMP-2025} and the extensive references therein. Given the significance of these results in the unit disk, it is natural to investigate their higher-dimensional analogues. This leads us to the following problem:
\begin{prob}\label{Qn-4.1}Is it possible to establish Theorems F--H for holomorphic functions defined on the unit polydisc $\mathbb{P}\Delta(0; \mathbf{1}_n)$ in terms of the associated Schwarz functions?
\end{prob}

The main objective of this work is to develop multidimensional counterparts of Theorems F-H. Specifically, Theorem \ref{Th-2.3} extends Theorem F to the setting of several complex variables through the use of Schwarz functions $\omega \in \mathcal{B}_{n,m}$. In addition, Theorems \ref{Th-2.4} and \ref{Th-2.5} generalize Theorems G and H, respectively, by replacing the classical univariate derivative with the directional derivative operator $\partial_uf(z)$, which constitutes the natural geometric extension in higher dimensions.

\subsection{\bf Basic Notations in several complex variables}\label{Sub-Sec-1.3}
For $z=(z_1,\ldots,z_n)$ and $w=(w_1,\ldots,w_n)$ in $\mathbb{C}^{n}$, we denote $\langle z,w\rangle=z_1\ol w_1+\ldots+z_n \ol w_n$ and $||z||=\sqrt{\langle z,z\rangle}$. The absolute value of a complex number $z_1$ is denoted by $|z_1|$ and for $z\in\mathbb{C}^n$, we define
\begin{align*}
	||z||_{\infty}=\max\limits_{1\leq i\leq n}|z_i|.
\end{align*}

Throughout the paper, for $z=(z_1,\ldots,z_n)\in\mathbb{C}^n$ and $p\in\mathbb{N}$, we define $z^p=(z_1^p,\ldots,z_n^p)$.
An open polydisk (or open polycylinder) in $\mathbb{C}^n$ is a subset $\mathbb{P}\Delta(a;r)\subset \mathbb{C}^n$ of the form 
\[\mathbb{P}\Delta(a;r)=\prod\limits_{j=1}^n \Delta(a_j;r_j)=\lbrace z\in\mathbb{C}^n: |z_i-a_i|<r_i,\;i=1,2,\ldots,n\rbrace,\]
the point $a=(a_1,\ldots,a_n)\in\mathbb{C}^n$ is called the centre of the polydisk and $r=(r_1,\ldots,r_n)\in\mathbb{R}^n\;(r_i>0)$ is called the polyradius. It is easy to see that
\begin{align*}
	\mathbb{P}\Delta(0;1)=\mathbb{P}\Delta(0;1_n)=\prod\limits_{j=1}^n \Delta(0_n;1_n).
\end{align*}

The closure of $\mathbb{P}\Delta(a;r)$ will be called the closed polydisk with centre $a$ and polyradius $r$ and will be denoted by $\ol{\mathbb{P}\Delta}(a;r)$. 

\smallskip
A multi-index $\alpha=(\alpha_1,\ldots,\alpha_n)$ of dimension $n$ consists of n non-negative integers $\alpha_j,\;1\leq j\leq n$; the degree of a multi-index $\alpha$ is the sum $|\alpha|=\sum_{j=1}^n \alpha_j$ and we denote $\alpha!=\alpha_1!\ldots \alpha_n!$. For $z=(z_1,\ldots,z_n)\in\mathbb{C}^n$ and a multi-index $\alpha=(\alpha_1,\ldots,\alpha_n)$, we define 
\[z^{\alpha}=\prod\limits_{j=1}^n z_j^{\alpha_j}\;\;\text{and}\;\;|z|^{\alpha}=\prod\limits_{j=1}^n |z_j|^{\alpha_j}.\]

For two multi-indexes $\alpha=(\alpha_1,\ldots,\alpha_n)$ and $\nu=(\nu_1,\ldots,\nu_n)$, we define $\nu^{\alpha}=\nu_1^{\alpha_1}\ldots \nu_n^{\alpha_n}.$ 
Let $f$ be a holomorphic function in a domain $\Omega\subset \mathbb{C}^n$, and $c=(c_1,\ldots,c_n)\in \Omega$. Then in a polydisk $\mathbb{P}\Delta(c;r)\subset \Omega$ with centre $c$, $f(z)$ has a power series expansion in $z_1-c_1,\ldots,z_n-c_n$,
\begin{align*}
	f(z)&=\sum\limits_{\alpha_1,\alpha_2,\ldots,\alpha_n=0}^{\infty} a_{\alpha_1,\alpha_2,\ldots,\alpha_n}(z_1-c_1)^{\alpha_1}(z_2-c_2)^{\alpha_2}\ldots (z_n-c_n)^{\alpha_n}\\&=
	\sum\limits_{|\alpha|=0} a_{\alpha}(z-c)^{\alpha}=\sum\limits_{|\alpha|=0}^{\infty} P_{|\alpha|}(z-c),
\end{align*}
which is absolutely convergent in $\mathbb{P}\Delta(c;r)$, where the term $P_k(z-c)$ is a homogeneous polynomial of degree $k$. \vspace{1.2mm}

One of our aims is to establish multidimensional analogues of Theorems F-G. For this purpose, we consider $f$ to be a holomorphic function in $\Omega \subset \mathbb{C}^n$ and define
\[ S_n=\{(u_1,u_2,\ldots,u_n)\in \mathbb{C}^{n}: |u_1|+|u_2|+\cdots+|u_n|=1\}.\]

For a differentiable function $f$ and a direction $u=(u_1,u_2,\ldots,u_n) \in S_n$ the directional derivative of $f$ along $u$ is defined by $\partial_{u}(f)$ of $f$ along a direction  is defined by
\[\partial_u(f)=\sideset{}{_{j=1}^{n}}{\sum} u_{j}\partial_{z_j}(f).\]
\par The $k$-th order directional derivative  $\partial^k_u(f)$ of $f$ along $u$ is then defined inductively by
$$\partial^k_u(f)=\partial_u(\partial_u^{k-1}(f)),\;\;\; k\in \mathbb{Z}^+$$
with the base case $\partial^1_u(f)=\partial_u(f).$\vspace{1.2mm}

Let $G\not=\varnothing$ be an open subset of $\mathbb{C}^n$. Let $f$ be a holomorphic function on $G$. For a point $a\in\mathbb{C}^n$, we write $f(z)=\sum_{i=0}^{\infty}P_i(z-a)$, where the term $P_i(z-a)$ is either identically zero or a homogeneous polynomial of degree $i$. Denote the zero-multiplicity of $f$ at $a$ by $k=\min\{i:P_i(z-a)\not\equiv 0\}$. Clearly $1$ is the zero-multiplicity of $f$ at $a$ when $f(a)=0$ and $\frac{\partial f(a)}{\partial z_j}\neq 0$ for some $j=1,2,\ldots,n$.\vspace{1.2mm}

Let $\Delta_{z_k}(0;1)$ be the unit disk on the $z_k$-plane. Let $\omega_i:\Delta_{z_k}(0;1)\to \mathbb{C}$ such that $|\omega_i(z)|\leq 1$ for all $z\in \Delta_{z_k}(0;1)$ and $\omega_i(0)=\omega_i^{(1)}(0)=\ldots=\omega_i^{(m-1)}(0)=0$ and $\omega_i^{(m)}(0)\neq 0$, where $i=1,2,\ldots, n$.
In what follows, let $z = (z_1, \dots, z_n)$ denote an element of $\mathbb{C}^n$ and let
\begin{align*}
	{ \mathcal{B}_{n,m}=\{\omega(z)\in\mathbb{C}^n:\omega(z)=(\omega_1(z_1),\omega_2(z_2),\ldots,\omega_n(z_n))\}.}
\end{align*}

\section{{\bf Main Results}}\label{Sec-2}
Theorem \ref{Th-2.3} below establishes a weighted Bohr-type inequality for holomorphic functions in the polydisk composed with mappings from $\mathcal{B}_{n,m}$. The parameter $t\in[0,1]$ interpolates between the pointwise estimate and the full Bohr sum. We further determine the sharp radius $R_{m,n,t}$, thereby extending the classical Bohr radius problem to a more general multidimensional setting.

\begin{theo}\label{Th-2.3} Let $f(z)=\sum_{|\alpha|=0} a_{\alpha}z^{\alpha}$ be a holomorphic function in the polydisk $\mathbb{P}\Delta(0;1_n)$ such that $|f(z)|\leq 1$ for all $z\in \mathbb{P}\Delta(0;1/n)$. Suppose $z=(z_1,\ldots,z_n)\in \mathbb{P}\Delta(0;1/n)$ and $r=(r_1,r_2,\ldots,r_n)$ such that $||z||_{\infty}={\bf r}$. Let $\omega\in \mathcal{B}_{n,m}$ for some $m\in\mathbb{N}$. Then for arbitrary $t\in [0,1]$, it holds that
\begin{align*}
\mathcal{A}(z, {\bf r}):=t|f(\omega(z))|+(1-t)\sum_{k=0}^{\infty}\sum\limits_{|\alpha|=k}|a_{\alpha}|\, |\omega(z)|^{\alpha}\leq 1\nonumber
\end{align*}
for ${\bf r}\leq R_{m,n,t}$, where 
\begin{align*}
R_{m,n,t}=
\begin{cases}
\sqrt[m]{\dfrac{1-2\sqrt{1-t}}{n(4t-3)}}, &t \in \left[0,\dfrac{3}{4}\right)\cup \left(\dfrac{3}{4},1\right]\vspace{1.2mm}\\[6pt]
\sqrt[m]{\dfrac{1}{2n}}, &t=\frac{3}{4}.
\end{cases}
\end{align*}
The radius $R_{m,n,t}$ is best possible. 
\end{theo}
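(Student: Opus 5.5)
Throughout, write $\mathcal{A}(z,\mathbf r)\le 1$ for the target inequality. The plan is to reduce to the one-variable estimates behind Theorem D/F. The main tool is the slice function $g(\zeta)=f(\zeta\,\xi)$ for a fixed $\xi$ with $\|\xi\|_\infty=1$. The hypothesis $|f|\le 1$ (on the polydisk where $f$ is bounded) gives a bound on the homogeneous parts: writing $f=\sum_k P_k$, the one-variable coefficient estimate $|b_k|\le 1-|b_0|^2$ applied to $g$ yields $|P_k(z)|\le (1-|a_0|^2)$ for $\|z\|_\infty\le 1$. For the Bohr-type sum we need the stronger quantity $\sum_{|\alpha|=k}|a_\alpha|\,|w|^\alpha$. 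The crude bound $\sum_{|\alpha|=k}|a_\alpha|\rho^{|\alpha|}\le n^k\rho^k(1-|a_0|^2)$ is available via the standard estimate $|a_\alpha|\le 1-|a_0|^2$ for $|\alpha|\ge1$ together with counting arguments of Boas--Khavinson type. I would aim to establish
\[
\sum_{|\alpha|=k}|a_\alpha|\,|w|^{\alpha}\le (1-a^2)\,(n\rho)^k,\qquad \|w\|_\infty\le\rho ,\ a=|a_0|,
\]
which is exactly what produces the factor $n$ in $R_{m,n,t}$. This step is the main obstacle: the naive monomial count gives $\binom{n+k-1}{k}$ rather than $n^k$. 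If that fails, I would instead work along the diagonal and use the fact that $\sum_{|\alpha|=k}|a_\alpha|\rho^k$ is dominated through the scaled variable $n\rho$.

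Next, Schwarz's lemma applied to each component $\omega_i\in$ the univariate class $B_m$ gives $|\omega_i(z_i)|\le |z_i|^m\le \mathbf r^m$, so $\|\omega(z)\|_\infty\le \mathbf r^m=:\rho$. For the pointwise term, the polydisk Schwarz--Pick inequality along the slice through $\omega(z)$ gives $|f(\omega(z))|\le \frac{a+\rho'}{1+a\rho'}$ with $\rho'=n\rho$, or with $\rho'=\rho$ if that is available. Combining,
\[
\mathcal A\le t\,\frac{a+s}{1+as}+(1-t)\Bigl(a+(1-a^2)\frac{s}{1-s}\Bigr),\qquad s=n\mathbf r^m .
\]
This is precisely the one-variable expression from Wu et al.\ (Theorem D) with $r$ replaced by $s$. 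Their analysis in $a\in[0,1]$ shows that the right side is at most $1$ iff $s\le \frac{1-2\sqrt{1-t}}{4t-3}$, with value $\tfrac12$ at $t=3/4$. Solving for $\mathbf r$ gives $R_{m,n,t}$. I will take the maximization in $a$ (the function minus $1$ factors as $(1-a)\cdot(\cdots)$, and one checks the bracket at $a\to1$) as given from Theorem D.

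For sharpness, take $\omega_i(z_i)=z_i^m$ and a Möbius-type function of the sum of the variables, e.g. $f(z)=\frac{a-\frac{1}{n}\sum z_j\cdot n}{1-a\sum z_j}$, arranged so that its homogeneous coefficients saturate the estimate above. Evaluate at $z=(\mathbf r,\dots,\mathbf r)$ with the sign chosen so that $|f(\omega(z))|=\frac{a+s}{1+as}$. Letting $a\to1^-$ recovers the one-variable extremal expression in $s=n\mathbf r^m$, which exceeds $1$ for $s$ beyond the radius, by the sharpness part of Theorem D. I expect the consistency between the normalization of the domain on which $|f|\le1$ and the factor $n$ to require care; I would check the extremal function first and adjust the coefficient lemma to match it.
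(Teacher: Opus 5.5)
\textbf{Verdict.} Your plan follows the paper's proof step for step, but the normalization point you flag at the end is a genuine gap that no adjustment closes, and the paper's proof has the same gap.

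\textbf{Where the plan matches the paper.} The ingredients are identical: Schwarz's lemma on each $\omega_i$ gives $\|\omega(z)\|_\infty\le\mathbf r^m$; Lemma~1 is used and then weakened from $\mathbf r^m$ to $n\mathbf r^m$; the coefficient bound is $|a_\alpha|\le1-|a_0|^2$; the problem reduces to the one-variable expression of Theorem~D in $s=n\mathbf r^m$ (the paper redoes that analysis through the cubic $\alpha(x)$); and the extremal pair is $\omega(z)=(z_1^m,\dots,z_n^m)$, $f_a(z)=\frac{a-(z_1+\cdots+z_n)}{1-a(z_1+\cdots+z_n)}$.

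The step you call the main obstacle is harmless. Since $(n-1)(j-1)\ge0$, we have $\binom{n+k-1}{k}=\prod_{j=1}^{k}\frac{n+j-1}{j}\le n^k$. This is exactly the count $\sum_{|\alpha|=k}1\le n^k$ the paper uses. The paper's proof contains the real gap too: it opens by asserting $|f|\le1$ on $\mathbb{P}\Delta(0;1_n)$, although the hypothesis gives this only on $\mathbb{P}\Delta(0;1/n)$.

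\textbf{The gap.} The two halves of the argument live in different classes.
\begin{itemize}
\item Your slice bound $|a_\alpha|\le1-|a_0|^2$ needs $|f|\le1$ on all of $\mathbb{P}\Delta(0;1_n)$.
\item $f_a$ has a pole on $\{z_1+\cdots+z_n=1/a\}$, which meets $\mathbb{P}\Delta(0;1_n)$ as soon as $a>1/n$.
\item Correspondingly, its coefficients $(1-a^2)a^{k-1}k!/\alpha!$ break that bound. For $n=2$, $|a_{(1,1)}|=2a(1-a^2)>1-a^2$ when $a>1/2$. So $f_a$ is not in the class for which your estimate was proved.
\end{itemize}

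Neither reading of the hypothesis rescues the statement.
\begin{itemize}
\item Suppose $|f|\le1$ on $\mathbb{P}\Delta(0;1_n)$. Then your upper bound is valid, but the radius is not best possible. For $t=0$, $m=1$ the left side is at most $\sum_\alpha|a_\alpha|\mathbf r^{|\alpha|}$. By Boas--Khavinson ($K_n>\frac{1}{3\sqrt n}$) this is at most $1$ for $\mathbf r\le\frac{1}{3\sqrt n}$. Hence the inequality persists for $\frac{1}{3n}<\mathbf r<\min\{\frac{1}{3\sqrt n},\frac1n\}$ whenever $n\ge2$.
\item Suppose $|f|\le1$ only on $\mathbb{P}\Delta(0;1/n)$. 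Then $f_a$ is admissible, but rescaling $g(w)=f(w/n)$ yields only $|a_\alpha|\le n^{|\alpha|}(1-|a_0|^2)$. Moreover the claim itself is false. With $t=0$, $m=1$, $\omega(z)=z$, it would force the $n$-dimensional Bohr radius to satisfy $K_n\ge\frac13$. This contradicts $K_n<2\sqrt{\log n/n}$ once the right-hand side drops below $\frac13$.
\end{itemize}

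\textbf{Consequence.} No sharper counting lemma completes the proof; the theorem must be restated first. One option is to assume $|f|\le1$ on $\mathbb{P}\Delta(0;1_n)$ and drop the sharpness claim for $n\ge2$. After that, your argument, like the paper's, proves the inequality part.
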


\begin{rem} In the case where $\omega(z)=z$ for $z\in\mathbb{C}^n$ and $t=0$, Theorem \ref{Th-2.3} recovers the classical Bohr radius problem in several complex variables.
\end{rem}

While extending derivative-based inequalities to $\mathbb{C}^n$ often introduces significant geometric complexities, the directional derivative operator $\partial_u f(z)$ serves as the natural multidimensional counterpart to the univariate complex derivative. 
By utilizing this operator, we establish sharp growth estimates that extend known univariate bounds to the polydisc $\mathbb{P}\Delta(0;1_n)$. Consequently, we present Theorem \ref{Th-2.4} as the formal multidimensional generalization of Theorem G.

\begin{theo}\label{Th-2.4} Let $f(z)=\sum_{|\alpha|=0} a_{\alpha}z^{\alpha}$ be a holomorphic function in the polydisk $\mathbb{P}\Delta(0;1_n)$ such that $|f(z)|\leq 1$ for all $z\in \mathbb{P}\Delta(0;1/n)$. Suppose $z=(z_1,\ldots,z_n)\in \mathbb{P}\Delta(0;1/n)$ and $r=(r_1,r_2,\ldots,r_n)$ such that $||z||_{\infty}={\bf r}$. Let $\omega\in \mathcal{B}_{n,m}$ for some $m\in\mathbb{N}$. Then for arbitrary $\lambda\in (0,+\infty)$, it holds that
\begin{align*}
\mathcal{B}(z, {\bf r}):=|f(\omega(z))|+|\partial_u(f(\omega(z)))|\;||\omega(z)||_{\infty}+\lambda \sum_{k=2}^{\infty}\sum\limits_{|\alpha|=k}|a_{\alpha}|\, |\omega(z)|^{\alpha}\leq 1
\end{align*}
for ${\bf r}\leq R_{m,n,\lambda}$, where  
\begin{align*}
R_{m,n,\lambda}=
\begin{cases}
{\bf r}_{\lambda}, & \lambda \in \left(\dfrac{1}{2}, +\infty\right) \\
{\bf r}_*, & \lambda \in \left(0, \dfrac{1}{2}\right]
\end{cases}
\end{align*}
is the best possible, and the radii ${\bf r}_{\lambda}$ and ${\bf r}_*$ are the unique positive real roots of the equations
\begin{align*}
2\lambda (n{\bf r}^m)^4 + (4\lambda - 1) (n{\bf r}^m)^3 + (2\lambda - 1) (n{\bf r}^m)^2 + 3(n{\bf r}^m) - 1 = 0
\end{align*}
and
\begin{align*}
(n{\bf r}^m)^4+(n{\bf r}^m)^3+ 3(n{\bf r}^m) - 1 = 0
\end{align*}
in the interval $\left(0, \sqrt[m]{\dfrac{\sqrt{2} - 1}{n}}\right)$ respectively.
\end{theo}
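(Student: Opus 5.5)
Work entirely with homogeneous expansions and reduce to one complex variable. With $\omega\in\mathcal{B}_{n,m}$, the Schwarz lemma applied to each $\omega_i$ (zero of order $m$ at the origin) gives $|\omega_i(z_i)|\le |z_i|^m\le {\bf r}^m$. Hence
\[
\|\omega(z)\|_\infty\le {\bf r}^m, \qquad \sum_{|\alpha|=k}|a_\alpha||\omega(z)|^\alpha\le {\bf r}^{mk}\sum_{|\alpha|=k}|a_\alpha|.
\]
I will control the homogeneous parts of $f$ through slice functions. Fix the point $\zeta:=\omega(z)$ and set $\rho=n{\bf r}^m$. Since $R_{m,n,\lambda}<\sqrt[m]{(\sqrt2-1)/n}$, we have $\rho<\sqrt2-1<1$.

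Define $F(\xi)=f(\xi\, e/n)$ for $\xi\in\mathbb{D}$ and a unimodular vector $e$ (componentwise, e.g.\ $e=(e^{i\theta_1},\dots,e^{i\theta_n})$). Because $\xi e/n\in\mathbb{P}\Delta(0;1/n)$, we get $|F|\le1$, and $F(\xi)=\sum_k P_k(e/n)\xi^k$. The standard Wiener estimate then gives $|P_k(e/n)|\le 1-|a_0|^2$ for $k\ge1$. Taking the choices of $e$ that realize the absolute coefficient sum (the usual Bohr-in-polydisc trick of averaging over the torus, or majorization), this yields
\[
\sum_{|\alpha|=k}|a_\alpha|n^{-k}\le 1-a^2, \qquad a=|a_0|,
\]
and therefore
\[
\lambda\sum_{k\ge2}\sum_{|\alpha|=k}|a_\alpha||\zeta|^\alpha\le \lambda(1-a^2)\frac{\rho^2}{1-\rho}.
\]
This step is the one I expect to be the main obstacle. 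Passing from $|P_k(e/n)|$ to $\sum_{|\alpha|=k}|a_\alpha|n^{-k}$ is not automatic, and I would try to mimic exactly how the scaling $1/n$ was exploited in the proof of Theorem \ref{Th-2.3}.

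For the first two terms, consider the one-variable function $g(\xi)=f(\xi\zeta/\|\zeta\|_\infty\cdot(1/n))$, or more simply use the Schwarz--Pick lemma for the disc $\{\zeta+\xi v\}$. Since $\zeta\in \mathbb{P}\Delta(0;{\bf r}^m)$, the slice through $\zeta$ in direction $u$ (with $\sum|u_j|=1$) stays in $\mathbb{P}\Delta(0;1/n)$ for $|\xi|<1/n-{\bf r}^m$. Combining this with the radial slice, I expect to obtain
\[
|f(\zeta)|\le \frac{\rho+a}{1+a\rho}, \qquad |\partial_u f(\zeta)|\,\|\zeta\|_\infty\le \frac{\rho(1-|f(\zeta)|^2)}{1-\rho^2}.
\]
The resulting majorant is
\[
\Phi(x)=x+\frac{\rho}{1-\rho^2}(1-x^2)+\lambda(1-a^2)\frac{\rho^2}{1-\rho},
\]
where $x=|f(\zeta)|\le (\rho+a)/(1+a\rho)$. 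Because $\rho<\sqrt2-1$, we have $\rho/(1-\rho^2)<1/2$, so $\Phi$ is increasing in $x$. Substituting the upper bound for $x$ reduces the problem to the one-variable inequality $\Psi(a,\rho)\le1$ on $a\in[0,1]$. This is exactly the function analyzed in Theorem G with $r^m$ replaced by $\rho$.

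Following that analysis, $1-\Psi$ factors as $(1-a)$ times a quadratic in $a$. For $\lambda>1/2$ the worst case is $a\to1$, which gives the quartic $2\lambda\rho^4+(4\lambda-1)\rho^3+(2\lambda-1)\rho^2+3\rho-1\le0$. For $\lambda\le1/2$ the worst case is at an interior or endpoint value, which gives $\rho^4+\rho^3+3\rho-1\le0$. These produce ${\bf r}_\lambda$ and ${\bf r}_*$.

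For sharpness, I take $\omega_i(z_i)=z_i^m$, the point $z=({\bf r},\dots,{\bf r})$, and
\[
f(z)=\frac{a-(z_1+\cdots+z_n)}{1-a(z_1+\cdots+z_n)},
\]
with $u=(1/n,\dots,1/n)$. This $f$ is bounded by $1$ on $\mathbb{P}\Delta(0;1/n)$ and restricts to the one-variable extremal function in the variable $\rho=n{\bf r}^m$. Letting $a\to1$ (respectively, the optimizing $a$) shows that $\mathcal{B}>1$ just beyond $R_{m,n,\lambda}$.
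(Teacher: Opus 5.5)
\textbf{There is a genuine gap: the key estimate fails, and so do the radius analysis and the sharpness plan.}

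\textbf{The coefficient-sum estimate.} The step you flagged cannot be repaired along the lines you suggest. The bound $\sum_{|\alpha|=k}|a_\alpha|n^{-k}\le 1-|a_0|^2$ is false for $k\ge2$, $n\ge2$ under your literal hypothesis ($|f|\le1$ on $\mathbb{P}\Delta(0;1/n)$). Slices $\xi\mapsto f(\xi e/n)$ only control $|P_k(e/n)|$. Averaging over the torus only gives $\ell^2$ control of the coefficients. For $k\ge2$ you cannot align the phases of all monomials $e^\alpha$ at once. For instance, take $n=2$ and $f(z)=\frac{4}{\sqrt5}(z_1^2+z_1z_2-z_2^2)$. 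Then $|f|\le1$ on $\mathbb{P}\Delta(0;1/2)$, because $|t^2+t-1|=|1+2i\sin\theta|\le\sqrt5$ for $t=e^{i\theta}$. Yet $\sum_{|\alpha|=2}|a_\alpha|2^{-2}=3/\sqrt5>1$.

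\textbf{How the paper gets this bound.} The paper's proof works with $|f|\le1$ on the whole unit polydisc, as it says at the start of the proof despite the wording of the statement. It bounds each coefficient separately by Lemma~\ref{Lem3}, $|a_\alpha|\le1-|a_0|^2$. It then uses the crude count $\#\{\alpha:|\alpha|=k\}=\binom{n+k-1}{k}\le n^k$ together with $|\omega_j(z_j)|\le{\bf r}^m$. That counting is what produces $\rho=n{\bf r}^m$; the first two terms are merely weakened from ${\bf r}^m$ to $n{\bf r}^m$. Under your reading, Lemma~\ref{Lem3} applied to $f(\cdot/n)$ gives only $|a_\alpha|\le n^{|\alpha|}(1-|a_0|^2)$, which is too weak. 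Conversely, the extremal $f_a$ is bounded only on $\mathbb{P}\Delta(0;1/n)$, so the two halves of the statement rest on different hypotheses.

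\textbf{The derivative estimate.} The bound with denominator $1-\rho^2$ comes from Schwarz--Pick in each coordinate slice (Lemma~\ref{Lem5}), followed by $\sum_j|u_j|=1$. Your slice through $\zeta$ in direction $u$, of radius $1/n-{\bf r}^m$, only gives $|\partial_uf(\zeta)|\,\|\zeta\|_\infty\le\rho(1-|f(\zeta)|^2)/(1-\rho)$. That is weaker and would change the radius.

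\textbf{The case analysis.} After substituting $x_0$, the excess over $1$ is $(1-a)\bigl[(1-\rho)(a\rho^2+2\rho-1)+\lambda\rho^2(1+a)(1+a\rho)^2\bigr]/\bigl((1+a\rho)^2(1-\rho)\bigr)$. The bracket is a cubic in $a$ that is increasing for every $\lambda>0$. So the worst case is always $a\to1$, which gives $\xi(\rho)=2\lambda\rho^4+(4\lambda-1)\rho^3+(2\lambda-1)\rho^2+3\rho-1$. For $\lambda\le1/2$ the paper only uses $\xi\le w$, where $w(\rho)=\rho^4+\rho^3+3\rho-1$. No value of $a$ produces $w$ when $\lambda<1/2$. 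In fact $\xi-w=(2\lambda-1)\rho^2(1+\rho)^2<0$ there. So the same estimate yields $\mathcal{B}\le1$ up to the root ${\bf r}_\lambda$ of the $\xi$-equation, and ${\bf r}_\lambda>{\bf r}_*$. Hence ${\bf r}_*$ cannot be shown to be best possible for $\lambda<1/2$; for $n=1$ this is unconditional. The paper's Case~2 sharpness step asserts $\lim_{a\to1^-}K(a,\lambda,n{\bf r}^m)=w(n{\bf r}^m)$, but this limit equals $\xi(n{\bf r}^m)$.

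\textbf{The extremal point.} At $z=({\bf r},\ldots,{\bf r})$ the M\"obius map is evaluated at $+\rho$, so $1-|f_a(\omega(z))|=(1-a)(1+\rho)/(1-a\rho)$ for $a>\rho$. The $a\to1$ test then becomes $\rho^2+2\rho-1+2\lambda\rho^2(1-\rho)>0$, which is still false at $\rho=n{\bf r}_\lambda^m$, even when $n=1$. You need $\omega_j(z_j)=-{\bf r}^m$, as in the paper's choice $z_j=(-1)^{(2m-1)/m}r$.

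\textbf{The derivative term of $f_a$.} $f_a$ does not reduce to the one-variable extremal in $\rho$ for this term. Write $F(s)=(a-s)/(1-as)$; then $\partial_uf_a=F'(s)\sum_ju_j$, and $\|\omega(z)\|_\infty={\bf r}^m=\rho/n$. So the term is at most $|F'(s)|\rho/n$, a factor $n$ smaller than what the one-variable computation requires. The paper's \eqref{eq:3.14} contains the same slip. Consequently, for $n\ge2$ this family does not show that ${\bf r}_\lambda$ is sharp.
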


\begin{rem} Setting $\lambda=1$ in Theorem \ref{Th-2.4} recovers Corollary 4.5 \cite{Hu-Wang-Long-2021} in the one-dimensional setting, thereby extending that result to holomorphic functions in higher dimensions.
\end{rem}
\begin{rem} When $\omega(z)=z$, where $z\in\mathbb{C}^n$, Theorem \ref{Th-2.4} can be viewed as a higher-dimensional extension of Theorem E.
\end{rem}

\begin{rem} when $\omega(z)=z$, where $z\in\mathbb{C}^n$ and $\lambda=1$, Theorem \ref{Th-2.4} naturally extends Theorem C to the setting of several complex variables.
\end{rem}

\begin{theo}\label{Th-2.5} Let $f(z)=\sum_{|\alpha|=0} a_{\alpha}z^{\alpha}$ be a holomorphic function in the polydisk $\mathbb{P}\Delta(0;1_n)$ such that $|f(z)|\leq 1$ for all $z\in \mathbb{P}\Delta(0;1/n)$. Suppose $z=(z_1,\ldots,z_n)\in \mathbb{P}\Delta(0;1/n)$ and $r=(r_1,r_2,\ldots,r_n)$ such that $||z||_{\infty}={\bf r}$. Let $\omega\in \mathcal{B}_{n,m}$ for some $m\in\mathbb{N}$. Then for arbitrary $\lambda\in (0,+\infty)$, it holds that
\begin{align*}
\mathcal{C}(z, {\bf r}):=|f(\omega(z))|^2+|\partial_u(f(\omega(z)))|\;||\omega(z)||_{\infty}+\lambda \sum_{k=2}^{\infty}\sum\limits_{|\alpha|=k}|a_{\alpha}|\, |\omega(z)|^{\alpha}\leq 1
\end{align*}
for ${\bf r}\leq \tilde R_{m,n,\lambda}$, where  
\begin{align*}
\tilde R_{m,n,\lambda}=
\begin{cases}
\tilde {\bf r}_{\lambda}, & \lambda \in \left(1, +\infty\right) \\
\tilde {\bf r}_*, & \lambda \in \left(0,1\right]
\end{cases}
\end{align*}
is the best possible, and the radii $\tilde {\bf r}_{\lambda}$ and $\tilde {\bf r}_*$ are the unique positive real roots of the equations
\begin{align*}
\lambda (n{\bf r}^m)^4 + (2\lambda - 1) (n{\bf r}^m)^3 +\lambda(n{\bf r}^m)^2 + 2(n{\bf r}^m) - 1 = 0
\end{align*}
and
\begin{align*}
(n{\bf r}^m)^4+(n{\bf r}^m)^3++(n{\bf r}^m)^2+2(n{\bf r}^m) - 1 = 0
\end{align*}
in the interval $\left(0, \sqrt[m]{\dfrac{\sqrt{5} - 1}{2n}}\right)$ respectively.
\end{theo}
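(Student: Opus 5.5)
Theorem \ref{Th-2.5} reduces to a one-variable extremal problem through the restriction of $f$ to a complex line. Fix $z\in\mathbb{P}\Delta(0;1/n)$ with $\|z\|_\infty={\bf r}$ and put $w=\omega(z)$. Since each $\omega_i$ is a Schwarz function with a zero of order $m$ at the origin, the classical Schwarz lemma gives $|\omega_i(z_i)|\le |z_i|^m\le {\bf r}^m$. Hence $\|w\|_\infty\le {\bf r}^m$, and every term of the Bohr-type sum is dominated by its value at $|w|^\alpha\le {\bf r}^{m|\alpha|}$.

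Next I would pass to one variable. For $\zeta\in\mathbb{D}$, define $g(\zeta)=f(\zeta w/(n\|w\|_\infty))$. This point lies in $\mathbb{P}\Delta(0;1/n)$, so $|g|\le 1$ on $\mathbb{D}$. Group the coefficients of $f$ homogeneously, $P_k(w)=\sum_{|\alpha|=k}a_\alpha w^\alpha$, so that $g(\zeta)=\sum_k P_k(w)(n\|w\|_\infty)^{-k}\zeta^k$. Wiener's inequality $|b_k|\le 1-|b_0|^2$ for $g$ then gives
\[
\sum_{|\alpha|=k}|a_\alpha||w|^\alpha\le (1-|a_0|^2)(n\|w\|_\infty)^k\le (1-a^2)(n{\bf r}^m)^k,
\]
where $a=|a_0|$. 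To justify the first step I would rotate the coordinates of $w$ so that each monomial becomes nonnegative, since Wiener's bound applies to every rotated point $(e^{i\theta_1}w_1,\ldots)$. Handling the full sum $\sum|a_\alpha||w|^\alpha$ over $|\alpha|=k$, rather than $|P_k(w)|$, is the delicate step. The plan is to use the standard polydisc device: bound $\sum_{|\alpha|=k}|a_\alpha|\rho^\alpha\le \max_\theta|P_k(\rho e^{i\theta})|\cdot(\text{factor})$, where the factor $n^k$ comes from the $1/n$-polydisc normalization. This is exactly why the radii involve $n{\bf r}^m$.

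Set $\rho=n{\bf r}^m$. For the pointwise terms, Schwarz--Pick applied to $g$ at the point $\zeta_0$ corresponding to $w$ gives $|f(w)|\le (\rho+a)/(1+a\rho)$. For the derivative term, $\partial_u f(w)=\sum_j u_j\partial_{z_j}f(w)$ with $\sum|u_j|=1$. Each $\partial_{z_j}f$ is controlled by the one-variable Schwarz--Pick derivative estimate along the $j$-th coordinate slice, which is a disc of radius $1/n$, giving $|\partial_{z_j}f(w)|\,\|w\|_\infty\le \rho(1-|f(w)|^2)/(1-\rho^2)$. Combining all three bounds yields
\[
\mathcal{C}\le |f(w)|^2+\frac{\rho(1-|f(w)|^2)}{1-\rho^2}+\lambda(1-a^2)\frac{\rho^2}{1-\rho}.
\]
Write $x=|f(w)|$. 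For $\rho<(\sqrt5-1)/2$ the coefficient $1-\rho/(1-\rho^2)$ of $x^2$ is positive, so the bound increases in $x$, and I can substitute $x\le(\rho+a)/(1+a\rho)$. The result is a function $\Phi(a,\rho)$ with $\Phi(1,\rho)=1$.

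The main obstacle is the analysis of $\Phi$ in $a\in[0,1]$, which is exactly the computation in Theorem H with $r^m$ replaced by $\rho$. I would show that $1-\Phi(a,\rho)=(1-a)\,Q(a,\rho)$ and check when $Q\ge0$ on $[0,1]$. For $\lambda\le 1$ the worst case is $a\to1$, which produces $\rho^4+\rho^3+\rho^2+2\rho-1=0$. For $\lambda>1$ the critical case is $a\to1$ with the $\lambda$-term weighted, which produces the $\lambda$-quartic. Uniqueness of each root in the stated interval follows from monotonicity of the quartics there.

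For sharpness I would take the point $z=({\bf r},\ldots,{\bf r})$ with $\omega_i(z_i)=z_i^m$, together with
\[
f(z)=\frac{a-(z_1+\cdots+z_n)}{1-a(z_1+\cdots+z_n)},
\]
which is bounded by $1$ on $\mathbb{P}\Delta(0;1/n)$. Choose $u=(1/n,\ldots,1/n)$. Then $\sum_{|\alpha|=k}|a_\alpha||w|^\alpha=(1-a^2)a^{k-1}\rho^k$ equals the one-variable extremal coefficient. Letting $a\to1^-$ shows that the bound is exceeded for any $\rho$ beyond the root, exactly as in Theorem H.
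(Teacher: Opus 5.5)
Your plan has a genuine gap at the step you yourself flag as delicate: the bound $\sum_{|\alpha|=k}|a_\alpha||w|^\alpha\le(1-|a_0|^2)(n\|w\|_\infty)^k$.

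\textbf{Why your tools cannot give it.} Wiener's inequality on the slices $\zeta\mapsto f(\zeta e^{i\theta}w/(n\|w\|_\infty))$ bounds $|P_k(e^{i\theta}w)|$ for every $\theta\in\mathbb{R}^n$. Fourier extraction then bounds each single term $|a_\alpha||w|^\alpha$, but not the sum of moduli. You cannot rotate so that all $\binom{n+k-1}{k}$ monomials become nonnegative when you have only $n$ angles. The usual polydisc device (Cauchy--Schwarz plus Parseval) gives $\sum_{|\alpha|=k}|a_\alpha||w|^\alpha\le\binom{n+k-1}{k}^{1/2}\max_\theta|P_k(e^{i\theta}w)|$, which is an extra factor on top of $(n\|w\|_\infty)^k$.

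\textbf{The bound is false under your hypothesis.} With $|f|\le1$ only on $\mathbb{P}\Delta(0;1/n)$, take $n=2$ and $f(z)=\sqrt2\,(z_1^2+2z_1z_2-z_2^2)$. Then $a_0=0$ and $\sup_{\mathbb{P}\Delta(0;1/2)}|f|=1$, because $\max_{\phi}|1+2e^{i\phi}-e^{2i\phi}|=2\sqrt2$. Yet $\sum_{|\alpha|=2}|a_\alpha||w|^\alpha=4\sqrt2\,t^2>(2t)^2$ when $|w_1|=|w_2|=t$. With $\omega(z)=z$, $u=(1,0)$ and $z=(\mathbf r,\mathbf r)$, this $f$ gives $\mathcal C=\rho^4/2+\sqrt2(1+\lambda)\rho^2$ with $\rho=2\mathbf r$. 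At $\mathbf r=\tilde{\mathbf r}_\lambda$ this tends to $\sqrt2>1$ as $\lambda\to\infty$. So no argument that uses only the $1/n$-polydisc bound can work.

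\textbf{How the paper gets it.} The paper's proof uses $|f|\le1$ on the whole unit polydisc; it opens with this, even though the statement assumes it only on $\mathbb{P}\Delta(0;1/n)$. Lemma~\ref{Lem3} then gives $|a_\alpha|\le1-|a_0|^2$ for each $\alpha$. Combined with $|\omega(z)|^\alpha\le\mathbf r^{mk}$ and $\#\{\alpha:|\alpha|=k\}=\binom{n+k-1}{k}\le n^k$, this yields $(1-|a_0|^2)(n\mathbf r^m)^k$. So the $n^k$ is a count of monomials, not a rescaling.

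Your slice estimates for $|f(w)|$ and $|\partial_{z_j}f(w)|$ are correct and a reasonable substitute for Lemmas~\ref{Lem1} and \ref{Lem5}. Together with Lemma~\ref{Lem3} under the unit-polydisc hypothesis, they reproduce the paper's upper bound.

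\textbf{The $a\to1$ analysis.} Letting $a\to1$ yields $\lambda\rho^4+(2\lambda-1)\rho^3+\lambda\rho^2+2\rho-1$ for every $\lambda$. The $\lambda$-free quartic $\rho^4+\rho^3+\rho^2+2\rho-1$ appears only after majorizing $\lambda$ by $1$, as in the paper's Case 2. That majorization is strict for $\lambda<1$, so the $a\to1$ limit cannot show that $\tilde{\mathbf r}_*$ is best possible there.

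\textbf{The sharpness test.} Your test point is wrong: at $z=(\mathbf r,\ldots,\mathbf r)$ one has $\sum_j z_j^m=\rho$ and $|f_a(\omega(z))|=|a-\rho|/(1-a\rho)$. You need $z_j^m=-\mathbf r^m$, as in the paper.

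More seriously, with $s=z_1+\cdots+z_n$ one has $\partial_uf_a=-(1-a^2)(1-as)^{-2}\sum_ju_j$. At the correct point the derivative term is therefore at most $(1-a^2)\mathbf r^m/(1+a\rho)^2$. That is only $1/n$ of the $(1-a^2)\rho/(1+a\rho)^2$ your upper bound allows. Letting $a\to1$ then gives the paper's $\Psi(1,\rho,\lambda)$ decreased by $(1-\tfrac1n)\rho(1-\rho)$, which is negative at the stated radius when $n\ge2$.

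The paper's display \eqref{eq:3.14} records this term as $(1-a^2)n\mathbf r^m/(1+an\mathbf r^m)^2$, so its sharpness argument does not close this gap either. Also, $f_a$ is not holomorphic on the unit polydisc when $n\ge2$, and that is the hypothesis the upper bound actually needs.
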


\section{{\bf Key lemmas}}\label{Sec-3}
In order to establish our main results, we need the following lemmas. The first of these is a special case of \cite[Theorem 2.2 ]{Chen-Hamada-Ponnusamy-Vijayakumar-JAM-2024}.
\begin{lem}\label{Lem1}Let $f$ be holomorphic in the polydisk $\mathbb{P}\Delta(0;1_n)$ such that $|f(z)|\le 1$ for all $z\in \mathbb{P}\Delta(0;1_n)$. Then for all $z\in \mathbb{P}\Delta(0;1_n)$, we have
\[|f(z)|\leq \frac{|f(0)|+||z||_{\infty}}{1+|f(0)|||z||_{\infty}}.\]
\end{lem}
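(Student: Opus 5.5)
This is a Schwarz--Pick type growth estimate on the polydisc, and I will reduce it to the one-variable Schwarz--Pick lemma by restricting $f$ to a complex line through the origin. Fix $z\in\mathbb{P}\Delta(0;1_n)$ with $z\neq 0$. The case $z=0$ is trivial, since the bound then reads $|f(0)|\le|f(0)|$. Put ${\bf r}=\|z\|_{\infty}\in(0,1)$ and $\zeta_0=z/{\bf r}$, so that $\|\zeta_0\|_\infty=1$. Define the slice function
\[
g(\lambda)=f(\lambda \zeta_0),\qquad \lambda\in\mathbb{D}.
\]
For $|\lambda|<1$ every coordinate satisfies $|\lambda (\zeta_0)_j|\le|\lambda|<1$, so $\lambda\zeta_0\in\mathbb{P}\Delta(0;1_n)$. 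Hence $g$ is holomorphic on $\mathbb{D}$, $|g|\le 1$, $g(0)=f(0)$ and $g({\bf r})=f(z)$.

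I next apply the classical one-variable growth bound to $g$. If $|g(0)|=1$, the maximum principle makes $g$ constant and the inequality is trivial. Otherwise $|g|<1$ on $\mathbb{D}$; if $|g|$ attains the value $1$ at an interior point, $g$ is a unimodular constant, which is already covered. Set $a=g(0)$ and $\varphi_a(w)=(w-a)/(1-\bar a w)$. The function $h=\varphi_a\circ g$ maps $\mathbb{D}$ into $\overline{\mathbb{D}}$ and vanishes at $0$. By Schwarz's lemma, $|h(\lambda)|\le|\lambda|$.

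Inverting, $g=\varphi_{-a}\circ h$, so $g(\lambda)=(h+a)/(1+\bar a h)$. For $|h|\le s<1$, the image of the disk $|w|\le s$ under $w\mapsto (w+a)/(1+\bar a w)$ is a disk whose point of maximal modulus lies on the ray through $a$. This gives
\[
|g(\lambda)|\le \frac{|a|+s}{1+|a|s}.
\]
The same bound also follows from the elementary identity
\[
1-\left|\frac{w+a}{1+\bar a w}\right|^2=\frac{(1-|a|^2)(1-|w|^2)}{|1+\bar a w|^2}
\]
together with $|1+\bar a w|\ge 1-|a||w|$. Since $s\mapsto(|a|+s)/(1+|a|s)$ is increasing on $[0,1)$, I may take $s=|\lambda|$.

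Evaluating at $\lambda={\bf r}$ gives
\[
|f(z)|\le\frac{|f(0)|+\|z\|_\infty}{1+|f(0)|\,\|z\|_\infty},
\]
which is the statement. The only point needing care is the choice of slice: normalizing by $\|z\|_\infty$, not the Euclidean norm, is exactly what keeps the slice inside the polydisc and produces $\|z\|_\infty$ in the bound. The rest is routine. If one wants to avoid computing the disk image, the main obstacle is the monotone modulus bound for the Möbius map, and the identity above handles it directly.
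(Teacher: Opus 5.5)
Your proof is correct, but it takes a different route from the paper. The paper gives no argument for this lemma: it quotes it as a special case of Theorem 2.2 of Chen--Hamada--Ponnusamy--Vijayakumar, a general Schwarz-type lemma stated in a broader setting.

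You instead give a self-contained proof. You restrict $f$ to the complex line through $z/\|z\|_\infty$, which stays in $\mathbb{P}\Delta(0;1_n)$ exactly because the polydisc is the unit ball of $\|\cdot\|_\infty$. You then apply the one-variable Schwarz--Pick growth bound $|g(\lambda)|\le (|g(0)|+|\lambda|)/(1+|g(0)||\lambda|)$. The citation buys generality and brevity. Your argument buys transparency, and it shows why $\|z\|_\infty$ rather than the Euclidean norm appears in the estimate. Results of the cited kind are typically proved by exactly this slicing, so your proof is essentially the specialization to the polydisc.

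There is one slip, in your alternative justification. With the identity
\[
1-\left|\frac{w+a}{1+\bar a w}\right|^2=\frac{(1-|a|^2)(1-|w|^2)}{|1+\bar a w|^2},
\]
the inequality $|1+\bar a w|\ge 1-|a||w|$ goes the wrong way. It bounds $1-|g|^2$ from above, so it only produces a lower bound for $|g|$. What you need is the triangle inequality $|1+\bar a w|\le 1+|a||w|\le 1+|a|s$. Combined with $1-|w|^2\ge 1-s^2$, this gives
\[
1-|g|^2\ge \frac{(1-|a|^2)(1-s^2)}{(1+|a|s)^2}=1-\left(\frac{|a|+s}{1+|a|s}\right)^2,
\]
which is the desired upper bound.

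Your primary argument, that the image of $\{|w|\le s\}$ is a disk whose point of largest modulus lies on the ray through $a$, is correct. The conclusion therefore stands; just correct the stated inequality in the alternative.
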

The following lemma is contained in \cite[Corollary 1.3]{Liu-Chen-IJPAM-2012}.
\begin{lem}\label{Lem2} Let $f$ be holomorphic in the polydisk $\mathbb{P}\Delta(0;1_n)$ such that $|f(z)|< 1$ for all $z\in \mathbb{P}\Delta(0;1_n)$. Then for multi-index $\alpha=(\alpha_1,\ldots,\alpha_n)$, we have
\begin{align*}
\left|\frac{\partial^{|\alpha|} f(z)}{\partial z_1^{\alpha_1}\ldots \partial z_n^{\alpha_n}}\right|\leq \alpha!\frac{1-|f(z)|^2}{(1-||z||_{\infty}^2)^{|\alpha|}}(1+||z||_{\infty})^{|\alpha|-N}
\end{align*}
for all $z\in \mathbb{P}\Delta(0;1_n)$, where $N$ is the number of the indices $j$ such that $\alpha_j\neq 0$.
\end{lem}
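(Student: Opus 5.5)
The plan is to reduce the estimate at an arbitrary point $z$ to a coefficient estimate at the origin. I would do this by composing $f$ with automorphisms of the polydisk and of the unit disk, then transport the coefficient bound back through the inverse automorphism. For fixed $z\in\mathbb{P}\Delta(0;1_n)$, let $\varphi(\zeta)=(\varphi_1(\zeta_1),\ldots,\varphi_n(\zeta_n))$ with $\varphi_j(\zeta_j)=(\zeta_j+z_j)/(1+\overline{z_j}\zeta_j)$. Set $F=f\circ\varphi$, so that $F$ maps $\mathbb{P}\Delta(0;1_n)$ into the unit disk and $F(0)=f(z)=:c$. Write $F(\zeta)=\sum_\gamma b_\gamma\zeta^\gamma=\sum_k P_k(\zeta)$.

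The first step is the bound $|b_\gamma|\le 1-|c|^2$ for every $\gamma\neq 0$. For each $\varepsilon$ on the torus $\mathbb{T}^n$, the slice $\lambda\mapsto F(\lambda\varepsilon)=\sum_k P_k(\varepsilon)\lambda^k$ is a one-variable Schur function with constant term $c$. The classical coefficient bound recalled in the introduction therefore gives $|P_k(\varepsilon)|\le 1-|c|^2$ for $k\ge1$. Since $b_\gamma=\int_{\mathbb{T}^n}P_{|\gamma|}(\varepsilon)\,\overline{\varepsilon}^{\gamma}\,dm(\varepsilon)$, the bound on $b_\gamma$ follows.

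The second step recovers $\partial^\alpha f(z)$ from $f=F\circ\psi$, where $\psi_j(w_j)=(w_j-z_j)/(1-\overline{z_j}w_j)$. Expanding in powers of $w_j-z_j$ gives
\begin{align*}
\frac{\partial^{|\alpha|}f(z)}{\alpha!}=\sum_{\gamma}b_\gamma\prod_{j=1}^n\big[(w_j-z_j)^{\alpha_j}\big]\psi_j(w_j)^{\gamma_j}.
\end{align*}
Since $\psi_j(z_j)=0$, only $\gamma_j\le\alpha_j$ contributes, and $\gamma_j\ge1$ exactly when $\alpha_j\ge1$. In particular $\gamma\ne0$, so the factor $1-|f(z)|^2$ appears. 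The contributing $\gamma$ have $\gamma_j\ge1$ precisely for the $N$ indices with $\alpha_j\neq0$, which is the source of the exponent $|\alpha|-N$ (one factor $(1+\|z\|_\infty)$ is lost per active variable). Finally, the factors $(1-|z_j|^2)^{-\alpha_j}(1+|z_j|)^{\alpha_j-1}$ are replaced by the larger quantities with $|z_j|$ replaced by $\|z\|_\infty$; both are increasing in $|z_j|$.

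\textbf{The main obstacle.} The hard step is the last one: showing that the one-variable contribution for each active index $j$ is at most $(1+|z_j|)^{\alpha_j-1}/(1-|z_j|^2)^{\alpha_j}$. Applying the triangle inequality termwise over $\gamma_j$ is too crude. The coefficient of $(w_j-z_j)^{k}$ in $\psi_j^{l}$ has modulus $\binom{k-1}{l-1}|z_j|^{k-l}(1-|z_j|^2)^{-l}$, and summing these over $l$ gives $(1+|z_j|-|z_j|^3)^{k-1}(1-|z_j|^2)^{-k}$, which exceeds the target. So I would not bound the $b_\gamma$ individually. Instead I would handle one variable at a time with a sharp one-variable tool. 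Fixing the other variables, the function $w_j\mapsto f$ is a Schur function, and the sharp Ruscheweyh estimate
\begin{align*}
|g^{(k)}(w)|\le k!\,\frac{(1-|g(w)|^2)(1+|w|)^{k-1}}{(1-|w|^2)^k}
\end{align*}
produces exactly the factor $\alpha_j!\,(1+|z_j|)^{\alpha_j-1}(1-|z_j|^2)^{-\alpha_j}$. The difficulty is iterating this estimate: after differentiating in $w_j$, the function is no longer bounded by $1$. My first attempt would be to apply the estimate in the form where $1-|g|^2$ is read as a bound on a normalized derivative, and to use the polydisk automorphism in the remaining variables to keep the normalization. Failing that, I would fall back on the published proof in \cite[Corollary 1.3]{Liu-Chen-IJPAM-2012}.
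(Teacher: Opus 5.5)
The reduction you set up is sound: the polydisk automorphism, the bound $|b_\gamma|\le 1-|f(z)|^2$ for $\gamma\neq 0$, and pulling back through $\psi$ all work. As written, though, the proposal does not prove the lemma. You abandon the termwise estimate because of a miscalculation. The substitute you suggest, iterating Ruscheweyh's one-variable estimate, is left unfinished, since (as you observe) a partial derivative of $f$ is no longer bounded by $1$ and the estimate does not iterate directly. The paper gives no proof here; it only cites Liu--Chen, so your argument has to stand on its own.

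The error is in the Taylor coefficient of $\psi_j^l$ at $z_j$. Put $h=w_j-z_j$ and $s=1-|z_j|^2$. Then $1-\overline{z_j}w_j=s-\overline{z_j}h$, hence
\begin{align*}
\psi_j(w_j)^l=\frac{h^l}{s^l}\Bigl(1-\frac{\overline{z_j}\,h}{s}\Bigr)^{-l}=\sum_{k\ge l}\binom{k-1}{l-1}\frac{\overline{z_j}^{\,k-l}}{s^{k}}\,h^{k}.
\end{align*}
So the coefficient has modulus $\binom{k-1}{l-1}|z_j|^{k-l}(1-|z_j|^2)^{-k}$, not $(1-|z_j|^2)^{-l}$. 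For a check, take $k=2$, $l=1$: $\psi_j''(z_j)/2=\overline{z_j}/(1-|z_j|^2)^2$. With the correct exponent, the binomial theorem gives
\begin{align*}
\sum_{l=1}^{k}\binom{k-1}{l-1}\frac{|z_j|^{k-l}}{(1-|z_j|^2)^{k}}=\frac{(1+|z_j|)^{k-1}}{(1-|z_j|^2)^{k}}.
\end{align*}
This is exactly the one-variable Ruscheweyh factor; indeed it is the standard proof of Ruscheweyh's inequality. So the termwise triangle inequality is not too crude.

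To finish, use $\psi(w)^\gamma=\prod_j\psi_j(w_j)^{\gamma_j}$, so the coefficient of $(w-z)^\alpha$ in $\psi^\gamma$ factorizes. The admissible $\gamma$ are those with $1\le\gamma_j\le\alpha_j$ for active $j$ and $\gamma_j=0$ otherwise. The sum over them therefore splits into a product, and you get
\begin{align*}
\frac{1}{\alpha!}\left|\frac{\partial^{|\alpha|} f(z)}{\partial z_1^{\alpha_1}\cdots\partial z_n^{\alpha_n}}\right|\le\bigl(1-|f(z)|^2\bigr)\prod_{j:\,\alpha_j\ge 1}\frac{(1+|z_j|)^{\alpha_j-1}}{(1-|z_j|^2)^{\alpha_j}}.
\end{align*}
Your monotonicity step then completes the proof.

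Two small additions are needed:
\begin{enumerate}
\item A line justifying termwise differentiation of $\sum_\gamma b_\gamma\psi^\gamma$. It converges uniformly near $z$, because $\psi$ maps a small closed polydisk about $z$ into a compact subpolydisk.
\item The explicit assumption $\alpha\neq 0$, which you use to get $\gamma\neq 0$. For $\alpha=0$ the stated inequality reads $|f(z)|\le 1-|f(z)|^2$, which is false in general.
\end{enumerate}
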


The following lemma is contained in \cite[Lemma 2.1]{Liu-Chen-IJPAM-2012}.
\begin{lem}\label{Lem3} Let $f$ be holomorphic in the polydisk $\mathbb{P}\Delta(0;1_n)$ such that $|f(z)|\leq 1$ for all $z\in \mathbb{P}\Delta(0;1_n)$. Suppose $f(z)=a_0+\sum_{|\alpha|=1}^{\infty}a_{\alpha}z^{\alpha}$
for all $z\in \mathbb{P}\Delta(0;1_n)$. Then for any multi-index $\alpha$, we have 
\begin{align*}
|a_{\alpha}|\leq 1-|a_0|^2.
\end{align*}
\end{lem}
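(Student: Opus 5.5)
The idea is to reduce to the one-variable Wiener inequality along complex lines through the origin, and then isolate the individual coefficient $a_{\alpha}$ by integrating over the distinguished boundary torus. Write $f(z)=a_0+\sum_{k\ge1}P_k(z)$ with $P_k(z)=\sum_{|\alpha|=k}a_{\alpha}z^{\alpha}$ homogeneous of degree $k$. If $|a_0|=1$, the maximum principle applied on each slice (or on the polydisk directly) makes $f$ constant. Then every $a_{\alpha}$ with $|\alpha|\ge1$ vanishes and the claim is trivial. So we assume $|a_0|<1$.

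\textbf{Step 1 (slicing).} Fix $\eta=(\eta_1,\ldots,\eta_n)$ with $|\eta_j|=1$ for all $j$. Define $g_{\eta}(\lambda)=f(\lambda\eta)$ for $\lambda\in\mathbb{D}$. This is well defined since $\|\lambda\eta\|_{\infty}=|\lambda|<1$, so $\lambda\eta\in\mathbb{P}\Delta(0;1_n)$. Then $g_{\eta}$ is holomorphic on $\mathbb{D}$ with $|g_{\eta}|\le 1$. Homogeneity gives the expansion
\[g_{\eta}(\lambda)=a_0+\sum_{k\ge1}P_k(\eta)\lambda^k.\]

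\textbf{Step 2 (one-variable Wiener estimate).} Apply the classical bound $|b_k|\le 1-|b_0|^2$ for bounded analytic self-maps of the disk, which is the coefficient estimate quoted in the introduction. This gives $|P_k(\eta)|\le 1-|a_0|^2$ for every $k\ge1$ and every $\eta$ on the torus $\mathbb{T}^n$.

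If one prefers to derive this bound rather than cite it, use the root-of-unity average: set
\[h(\lambda)=\frac1k\sum_{j=0}^{k-1}g_{\eta}(e^{2\pi i j/k}\lambda)=a_0+P_k(\eta)\lambda^k+P_{2k}(\eta)\lambda^{2k}+\cdots.\]
Then $H(\mu)=h(\mu^{1/k})$ is a well-defined holomorphic self-map of the closed disk, with $H(0)=a_0$ and $H'(0)=P_k(\eta)$. The Schwarz--Pick lemma at $0$ gives $|H'(0)|\le 1-|H(0)|^2$.

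\textbf{Step 3 (isolating $a_{\alpha}$).} For a multi-index $\alpha$ with $|\alpha|=k\ge1$, write $\eta_j=e^{i\theta_j}$. Orthogonality of monomials on $\mathbb{T}^n$ gives
\[a_{\alpha}=\frac{1}{(2\pi)^n}\int_{[0,2\pi]^n}P_k(e^{i\theta_1},\ldots,e^{i\theta_n})\,e^{-i(\alpha_1\theta_1+\cdots+\alpha_n\theta_n)}\,d\theta_1\cdots d\theta_n.\]
Taking absolute values and using Step 2 yields $|a_{\alpha}|\le\sup_{\eta\in\mathbb{T}^n}|P_k(\eta)|\le 1-|a_0|^2$. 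For $\alpha=0$ the inequality would read $|a_0|\le1-|a_0|^2$, which is false in general. So the lemma is understood, as in the source, for $|\alpha|\ge1$.

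The only delicate point is Step 2, namely the scalar Wiener inequality and the reduction to it. The key observation is that restricting to the diagonal slice $\lambda\mapsto\lambda\eta$ turns each homogeneous part $P_k(\eta)$ into a single Taylor coefficient. This is exactly what allows the one-variable bound to be transferred. After that, the torus integral in Step 3 is routine.
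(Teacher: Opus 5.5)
Your proof is correct and complete. The paper gives no argument of its own for Lemma~\ref{Lem3}; it simply imports it as Lemma 2.1 of Liu and Chen. So there is no competing route to compare with, and your write-up supplies a self-contained justification of what the paper takes on citation.

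Your chain of steps is the standard one:
\begin{enumerate}
\item restrict to the slices $\lambda\mapsto f(\lambda\eta)$;
\item apply Wiener's inequality to get $|P_k(\eta)|\le 1-|a_0|^2$;
\item recover $a_\alpha$ as a Fourier coefficient of $P_k$ on $\mathbb{T}^n$.
\end{enumerate}
It actually gives more than the lemma. If you slice with $\eta=z/\|z\|_\infty$ in the closed polydisk instead of $\eta\in\mathbb{T}^n$, you get $|P_k(z)|\le (1-|a_0|^2)\|z\|_\infty^k$ on the whole polydisk, and the coefficient estimate is then a corollary.

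Two small points.
\begin{itemize}
\item In Step 2 your $H$ maps $\mathbb{D}$ into $\overline{\mathbb{D}}$, whereas Schwarz--Pick is usually stated for maps into $\mathbb{D}$. This follows from $|H(0)|=|a_0|<1$ and the maximum principle (and the inequality is trivial if $H$ is constant). It costs one line to say.
\item Your remark that $\alpha=0$ must be excluded is right. The constant $f\equiv 1$ violates the statement read literally, so the lemma should be stated for $|\alpha|\ge 1$. That is also the only case used in the proofs of Theorems~\ref{Th-2.3}--\ref{Th-2.5}.
\end{itemize}
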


The following lemma is contained in \cite[Lemma 6.1.28]{Graham-Kohr}.
\begin{lem}\label{Lem4} Let $f$ be holomorphic in the polydisk $\mathbb{P}\Delta(0;1_n)$ such that $|f(z)|\leq 1$ for all $z\in \mathbb{P}\Delta(0;1_n)$. Suppose $k(\geq 1)$ is the zero-multiplicity of $f$ at $0$. Then
\begin{align*}
|f(z)|\leq ||z||_{\infty}^k\;\text{ for all}\; z\in \mathbb{P}\Delta(0;1_n).
\end{align*}
\end{lem}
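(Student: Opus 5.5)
The plan is to reduce Lemma \ref{Lem4} to the one-variable Schwarz lemma by restricting $f$ to complex lines through the origin. If $f\equiv 0$, or if $z=0$, there is nothing to prove. So fix $z\in\mathbb{P}\Delta(0;1_n)$ with $z\neq 0$ and put $w=z/||z||_{\infty}$. Then $||w||_{\infty}=1$, and for every $\lambda\in\mathbb{D}$ we have $||\lambda w||_{\infty}=|\lambda|<1$, so $\lambda w\in\mathbb{P}\Delta(0;1_n)$. Hence
\[g(\lambda):=f(\lambda w)\]
is holomorphic on $\mathbb{D}$ and satisfies $|g(\lambda)|\le 1$ there.

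The next step is to check that $g$ vanishes to order at least $k$ at $\lambda=0$. Write the expansion of $f$ at $0$ in homogeneous polynomials, $f(\zeta)=\sum_{i\ge k}P_i(\zeta)$; this is possible because $k$ is the zero-multiplicity of $f$ at $0$. Homogeneity gives $P_i(\lambda w)=\lambda^iP_i(w)$, so
\[g(\lambda)=\sum_{i\ge k}P_i(w)\lambda^i .\]
This series converges for $|\lambda|<1$ by absolute convergence of the expansion of $f$ in the polydisk. In particular $g(\lambda)=\lambda^k h(\lambda)$ with $h$ holomorphic on $\mathbb{D}$.

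Now apply the classical Schwarz argument to $h$. For $0<\rho<1$ and $|\lambda|\le\rho$, the maximum principle gives
\[|h(\lambda)|\le \max_{|\xi|=\rho}\frac{|g(\xi)|}{\rho^k}\le \rho^{-k}.\]
Letting $\rho\to1^-$ yields $|h|\le1$, hence $|g(\lambda)|\le|\lambda|^k$ on $\mathbb{D}$. Choosing $\lambda=||z||_{\infty}\in[0,1)$ gives $g(\lambda)=f(z)$, and therefore $|f(z)|\le ||z||_{\infty}^k$.

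No step is a genuine obstacle. The only points needing care are that $\lambda w$ stays inside the polydisk for all $|\lambda|<1$, which is where the sup-norm normalization is used, and that homogeneity of the $P_i$ makes the order of vanishing of $g$ at least $k$. The latter holds even if some $P_i(w)$ vanish, since the bound only requires the order to be at least $k$.
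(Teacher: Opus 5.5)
Your proof is correct: the slice $\lambda\mapsto f(\lambda z/\|z\|_{\infty})$ is holomorphic on $\mathbb{D}$, bounded by $1$, and vanishes to order at least $k$ at $0$, so the one-variable Schwarz argument evaluated at $\lambda=\|z\|_{\infty}$ gives the bound. The paper gives no proof of its own and only cites Graham and Kohr (Lemma 6.1.28), whose standard proof is this same reduction to complex lines through the origin, so your approach matches the intended one.
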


The following lemma is contained in \cite[Corollary 1.3]{Liu-Chen-IJPAM-2012}.
\begin{lem}\label{Lem5} Let $f$ be holomorphic in the polydisk $\mathbb{P}\Delta(0;1_n)$ such that $|f(z)|< 1$ for all $z\in \mathbb{P}\Delta(0;1_n)$. Then for multi-index $\alpha=(\alpha_1,\ldots,\alpha_n)$, we have
\begin{align*}
\left|\frac{\partial^{|\alpha|} f(z)}{\partial z_1^{\alpha_1}\ldots \partial z_n^{\alpha_n}}\right|\leq \alpha!\frac{1-|f(z)|^2}{(1-||z||_{\infty}^2)^{|\alpha|}}(1+||z||_{\infty})^{|\alpha|-N}
\end{align*}
for all $z\in \mathbb{P}\Delta(0;1_n)$, where $N$ is the number of the indices $j$ such that $\alpha_j\neq 0$.
\end{lem}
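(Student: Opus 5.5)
The plan is to move the point $z$ to the origin with a polydisk automorphism, apply the coefficient bound of Lemma \ref{Lem3} to the transported function, and then transfer the estimate back to the Taylor coefficients of $f$ at $z$ by a majorant-series argument. We may assume $\alpha\neq 0$, the case $\alpha=0$ being trivial. Fix $z\in\mathbb{P}\Delta(0;1_n)$ and put $\rho=||z||_{\infty}$.

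\textbf{Step 1: transport $z$ to the origin.} Define $\varphi(\zeta)=(\varphi_1(\zeta_1),\ldots,\varphi_n(\zeta_n))$ with $\varphi_j(\zeta_j)=(\zeta_j+z_j)/(1+\ol{z_j}\zeta_j)$. This is an automorphism of $\mathbb{P}\Delta(0;1_n)$ with $\varphi(0)=z$. Set $g=f\circ\varphi=\sum_\beta b_\beta\zeta^\beta$. Then $|g|<1$ and $b_0=f(z)$, so Lemma \ref{Lem3} gives
\[
|b_\beta|\le 1-|f(z)|^2\quad\text{for all }\beta\neq 0 .
\]

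\textbf{Step 2: expand $f$ around $z$.} Write $f(w)=g(\psi(w))$, where $\psi_j(w_j)=\varphi_j^{-1}(w_j)=(w_j-z_j)/(1-\ol{z_j}w_j)$. Setting $h_j=w_j-z_j$ and $d_j=1-|z_j|^2$, we have
\[
\psi_j(w_j)=\frac{h_j/d_j}{1-\ol{z_j}h_j/d_j}=\sum_{k\ge1}\ol{z_j}^{\,k-1}\frac{h_j^k}{d_j^k}.
\]
Hence, for $w$ near $z$,
\[
f(w)=f(z)+\sum_{\beta\ne0}b_\beta\prod_j\psi_j(w_j)^{\beta_j},
\]
and the coefficient of $h^\alpha$ in this expansion is $\partial^\alpha f(z)/\alpha!$.

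\textbf{Step 3: majorization.} Since $|\ol{z_j}|\le\rho$ and $d_j\ge 1-\rho^2$, each $\psi_j$ is coefficientwise majorized by
\[
\Psi(t)=\sum_{k\ge1}\rho^{k-1}\frac{t^k}{(1-\rho^2)^k}=\frac{s}{1-\rho s},\qquad s=\frac{t}{1-\rho^2}.
\]
Combining this with $|b_\beta|\le 1-|f(z)|^2$, we get
\[
\frac{|\partial^\alpha f(z)|}{\alpha!}\le(1-|f(z)|^2)\,[t^\alpha]\Bigl(\prod_{j=1}^n\frac{1}{1-\Psi(t_j)}-1\Bigr).
\]
The product formula uses $\sum_{\beta}\prod_j\Psi(t_j)^{\beta_j}=\prod_j(1-\Psi(t_j))^{-1}$. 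Now compute
\[
\frac{1}{1-\Psi}=\frac{1-\rho s}{1-(1+\rho)s}=1+\frac{s}{1-(1+\rho)s}.
\]
Its coefficient of $t^a$ equals $1$ when $a=0$ and $(1+\rho)^{a-1}/(1-\rho^2)^a$ when $a\ge1$. Multiplying these coefficients over $j$ gives exactly
\[
\frac{(1+\rho)^{|\alpha|-N}}{(1-\rho^2)^{|\alpha|}},
\]
where $N$ is the number of indices with $\alpha_j\neq0$. This is the claimed bound.

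\textbf{Main obstacle.} The delicate point is justifying the coefficientwise comparison. One must check that the rearranged multiple series converges absolutely for small $|h|$. This holds because the majorant series converges for $|t_j|<(1-\rho^2)/(1+\rho)=1-\rho$, so the composition may be expanded and regrouped freely, and the coefficient of $h^\alpha$ is bounded by the corresponding majorant coefficient. The remaining work is the routine generating-function identity in Step 3, whose exact factorization explains the sharp form $(1+\rho)^{|\alpha|-N}$.
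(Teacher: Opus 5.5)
The paper gives no proof of this lemma. It is quoted from \cite[Corollary 1.3]{Liu-Chen-IJPAM-2012}, and it appears twice, as Lemma \ref{Lem2} and Lemma \ref{Lem5}. So there is no argument in the text to compare with.

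Your proposal is a correct, self-contained proof that rests only on Lemma \ref{Lem3}. The individual steps all check out:
\begin{itemize}
\item The inversion $\psi_j=\varphi_j^{-1}$ and its expansion in $h_j=w_j-z_j$ are right.
\item The majorant $\Psi$ is legitimate, because $|z_j|^{k-1}(1-|z_j|^2)^{-k}$ is increasing in $|z_j|$.
\item Majorization survives products, because $\Psi$ has nonnegative coefficients.
\item Only the finitely many $\beta\le\alpha$ (componentwise) contribute to the coefficient of $h^\alpha$.
\item On $|h_j|<1-\rho$ you have $|\psi_j(w_j)|\le\Psi(|h_j|)<1$. So $\psi(w)$ stays where the expansion of $g$ converges, and the absolute convergence you point out justifies identifying the regrouped series with the Taylor series of $f$ at $z$.
\end{itemize}
The identity $1/(1-\Psi)=1+s/(1-(1+\rho)s)$ then gives exactly $(1+\rho)^{|\alpha|-N}(1-\rho^2)^{-|\alpha|}$.

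\textbf{The case $\alpha=0$ is not trivial; it is false.} There the inequality reads $|f(z)|\le 1-|f(z)|^2$, which fails for, e.g., $f\equiv 9/10$. The lemma has to be read for $|\alpha|\ge 1$. That is also the only case the paper uses, with $|\alpha|=1$ in the proof of Theorem \ref{Th-2.4}. You should state this restriction rather than dismiss $\alpha=0$.

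\textbf{Your method gives more than is claimed.} Majorize each $\psi_j$ by its own $\Psi_j$, built from $|z_j|$ instead of $\rho$. The same computation then yields
\[
|\partial^\alpha f(z)|\le \alpha!\,(1-|f(z)|^2)\prod_{j:\,\alpha_j\neq 0}\frac{(1+|z_j|)^{\alpha_j-1}}{(1-|z_j|^2)^{\alpha_j}},
\]
and the stated bound follows by monotonicity in $|z_j|$.

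\textbf{Making the proof fully self-contained.} The paper only quotes Lemma \ref{Lem3}, but it has a short proof. Apply Wiener's inequality to the slices $\lambda\mapsto g(\lambda e^{i\theta_1},\ldots,\lambda e^{i\theta_n})=\sum_k P_k(e^{i\theta})\lambda^k$. This gives $\sup_\theta|P_k(e^{i\theta})|\le 1-|b_0|^2$ for $k\ge1$. Then observe that $b_\beta$ is the $\beta$-th Fourier coefficient of $P_{|\beta|}$ on the torus.
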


\begin{lem}\label{Lem6}\cite[Lemma 2.3]{Hu-Wang-Long-2022}For $0 \le x \le x_0 \le 1$, it holds that
\[
\phi(x) := x + A(1 - x^2) \le \phi(x_0)
\quad \text{whenever } 0 \le A \le \frac{1}{2}, \tag{2.1}
\]
and similarly,
\[
\psi(x) := x^2 + A(1 - x^2) \le \psi(x_0)
\quad \text{whenever } 0 \le A \le 1. \tag{2.2}
\]
\end{lem}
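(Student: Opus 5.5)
The plan is to prove both inequalities of Lemma \ref{Lem6} by showing that $\phi$ and $\psi$ are nondecreasing on $[0,1]$. Since $0\le x\le x_0\le 1$, monotonicity gives $\phi(x)\le\phi(x_0)$ and $\psi(x)\le\psi(x_0)$ at once. No earlier result of the paper is needed; the argument is elementary one-variable calculus.

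For $\phi$, I will differentiate and get $\phi'(x)=1-2Ax$. The hypotheses $0\le x\le 1$ and $0\le A\le \tfrac12$ give $0\le 2Ax\le 2A\le 1$, so $\phi'(x)\ge 0$ on $[0,1]$. By the mean value theorem, $\phi$ is nondecreasing on $[0,1]$, which proves (2.1). The restriction $A\le\tfrac12$ is exactly what keeps $\phi'$ nonnegative up to $x=1$. For $A>\tfrac12$ the function would have an interior maximum at $x=1/(2A)<1$, so the bound is sharp in $A$.

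For $\psi$, I will rewrite $\psi(x)=(1-A)x^2+A$. Since $A\le 1$, the coefficient $1-A$ is nonnegative. Because $x\mapsto x^2$ is increasing on $[0,1]$, $\psi$ is nondecreasing there. Equivalently, $\psi'(x)=2(1-A)x\ge 0$. This gives (2.2).

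I do not expect any real obstacle. The only point needing care is to use the range $x\le 1$ in the derivative bound for $\phi$, so that the hypothesis $A\le\tfrac12$ is actually used. In the later proofs of Theorems \ref{Th-2.4} and \ref{Th-2.5}, $x$ will be $|f(\omega(z))|$, $x_0$ the bound from Lemma \ref{Lem1}, and $A$ the Schwarz–Pick-type coefficient from Lemma \ref{Lem2}. The condition on $A$ will become the threshold $\lambda\le\tfrac12$, respectively $\lambda\le 1$, that separates the two cases for the radius.
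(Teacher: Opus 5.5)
Your monotonicity argument is correct and complete: $\phi'(x)=1-2Ax\ge 1-2A\ge 0$ on $[0,1]$, and $\psi(x)=(1-A)x^2+A$ with $1-A\ge 0$, give both inequalities; this is the standard proof of the lemma, which the paper does not prove but simply quotes from \cite{Hu-Wang-Long-2022}. The only inaccuracy is in your closing aside about the later use: in the proofs of Theorems~\ref{Th-2.4} and~\ref{Th-2.5} one has $A=\frac{n\mathbf{r}^m}{1-(n\mathbf{r}^m)^2}$, so the conditions $A\le\frac12$ and $A\le 1$ become the range restrictions $n\mathbf{r}^m\le\sqrt{2}-1$ and $n\mathbf{r}^m\le\frac{\sqrt{5}-1}{2}$, whereas the thresholds $\lambda=\frac12$ and $\lambda=1$ come from the unrelated fact that $\xi$ reduces to $w$ at $\lambda=\frac12$ and $\Psi(1,\cdot,\lambda)$ reduces to $S$ at $\lambda=1$.
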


\section{{\bf Proofs of the main results}}\label{Sec-4}
\begin{proof}[\bf Proof of Theorem \ref{Th-2.3}] 
By the given condition $f(z)=a_0+\sum_{|\alpha|=1}^{\infty} a_{\alpha} z^{\alpha}$ is holomorphic in the polydisk $\mathbb{P}\Delta(0;1_n)$ such that $|f(z)|\leq 1$ in $\mathbb{P}\Delta(0;1_n)$. 
Let $z\in\mathbb{P}\Delta(0;1/n)$ such that $|z_i|=r_i<1/n$ for $i=1,2,\ldots,n$. Let $\omega(z)=(\omega_1(z),\ldots,\omega_n(z))\in\mathcal{B}_{n,m}$. Then by Lemma \ref{Lem4}, we have 
\[|\omega_i(z)|\leq r_i^m\]
for $i=1,2,\ldots,n$. Clearly, 
\begin{align}\label{Eq-4.9}
	||\omega(z)||_{\infty}\leq {\bf r}^m,
\end{align}
where ${\bf r}=\max\{r_1,r_2,\ldots,r_n\}$. 
Consequently, by Lemma \ref{Lem1}, we obtain
\begin{align}
	\label{Eq-4.10} |f(\omega(z))| \leq \frac{||\omega(z)||_{\infty}+|a_0|}{1+|a_0|\;||\omega(z)||_{\infty}}\leq \frac{n{\bf r}^m+|a_0|}{1+|a_0|n{\bf r}^m}.
\end{align}

Also by Lemma \ref{Lem3}, we have
\bea\label{Eq-4.11}
 |a_{\alpha}| \leq 1-|a_0|^2,\eea
where $\alpha=(\alpha_1,\alpha_2,\ldots,\alpha_n)$ such that $|\alpha|=k$.
Now, using \eqref{Eq-4.10} and \eqref{Eq-4.11}, we obtain
\begin{align}
&\mathcal{A}(z,{\bf r}):=
t|f(\omega(z))|+(1-t)\sum\limits_{k=0}^{\infty}\sum\limits_{|\alpha|=k}|a_{\alpha}| |\omega(z)|^{\alpha} \nonumber\\
&\le t\frac{n{\bf r}^m+|a_0|}{1+n{\bf r}^m|a_0|}+(1-t)|a_0|+(1-t)(1-|a_0|^2)\sum\limits_{k=1}^{\infty} {\bf r}^{mk}\sum\limits_{|\alpha|=k}1\nonumber\\
&\le t\frac{n{\bf r}^m+|a_0|}{1+n{\bf r}^m|a_0|}+(1-t)|a_0|+(1-t)(1-|a_0|^2)\sum\limits_{k=1}^{\infty} ({n\bf r}^m)^{k}\nonumber\\
&\le t\frac{n{\bf r}^m+|a_0|}{1+n{\bf r}^m|a_0|}+(1-t)|a_0|+(1-t)(1-|a_0|^2)\frac{n{\bf r}^m}{1-n{\bf r}^m} \nonumber\\
&=\frac{t(n{\bf r}^m+|a_0|)(1-n{\bf r}^m)
+(1-t)(1+n{\bf r}^m|a_0|)\bigl[|a_0|(1-n{\bf r}^m)+(1-|a_0|^2)n{\bf r}^m\bigr]}
{(1+n{\bf r}^m|a_0|)(1-n{\bf r}^m)}\nonumber\\
&=1+\frac{\psi_{m,n,t}({\bf r})}{(1+n{\bf r}^m|a_0|)(1-n{\bf r}^m)}\nonumber\\
&\leq 1
\label{eq:3.2}
\end{align}
provided  $\psi_{m,n,t}({\bf r})\leq 0$, where
\begin{align*}
\psi_{m,n,t}({\bf r})=&(1-n{\bf r}^m)\bigl[t(n{\bf r}^m+|a_0|)-(1+n{\bf r}^m|a_0|)\bigr]\\&
+(1-t)(1+n{\bf r}^m|a_0|)\bigl[|a_0|(1-n{\bf r}^m)+(1-|a_0|^2)n{\bf r}^m\bigr].
\end{align*}

Set
\begin{align*}
\alpha(x)&=(1-n{\bf r}^m)\bigl[t(n{\bf r}^m+x)-(1+n{\bf r}^mx)\bigr]\\&
+(1-t)(1+n{\bf r}^mx)\bigl[x(1-n{\bf r}^m)+(1-x^2)n{\bf r}^m\bigr]\\
&=-(1-t)n^2{\bf r}^{2m}x^3-(1-t)n^2{\bf r}^{2m}x^2+\bigl[(1-n{\bf r}^m)^2+(1-t)n^2{\bf r}^{2m}\bigr]x\\
&\quad-n^2{\bf r}^{2m}t+2n{\bf r}^m-1,
\end{align*}
where $x=|a_0|\in[0,1)$.

To prove the inequality \eqref{eq:3.2}, it is sufficient to prove that
\begin{align*}
\alpha(x)\le 0 \quad \text{for}\; {\bf r}\le R_{m,n,t}.
\end{align*}

Note that
\begin{align*}
	\alpha'(x) = -3(1-t)n^2{\bf r}^{2m}x^2-2(1-t)n^2{\bf r}^{2m}x+(1-n{\bf r}^m)^2+(1-t)n^2{\bf r}^{2m}
\end{align*}
and
\begin{align*}
	\alpha''(x) =-2(1-t)n^2{\bf r}^{2m}(3x+1).
\end{align*}

Clearly $\alpha''(x) \leq 0$ for $x \in [0, 1]$ and so $\alpha'(x)$ is a decreasing function in $[0,1]$. Consequently
\begin{equation}
	\alpha'(x) \geq \alpha'(1)=(4t-3)n^2{\bf r}^{2m}-2n{\bf r}^m+1.
	\label{eq:3.4}
\end{equation}

We now consider following three cases.\vspace{1.2mm}

\noindent
{\bf Case 1.} Let $t \in [0, 3/4)$. By applying Descartes' Rule of Signs, we observe that the number of sign changes in the coefficients of the polynomial
	\begin{align*}Q_t(\rho) = (4t-3)\rho^{2} -2\rho +1, \quad \rho = n\mathbf{r}^m,
	\end{align*}
	is one.  We observe that 
	\begin{align*}
		Q_t(0) =1> 0\; \mbox{and}\;Q_t(1) =4(t-1)< 0.
	\end{align*} 

By the Intermediate Value Theorem, there exists exactly one root in the interval $(0, 1)$. Consequently, $Q_t(n\mathbf{r}^m) \geq 0$ for all $\mathbf{r} \in [0, R_{m,n,t}]$, where $R_{m,n,t}=\sqrt[m]{\frac{1-2\sqrt{1-t}}{n(4t-3)}}$ denotes the unique root in $(0,1)$ of the equation $Q_t(n\mathbf{r}^m) = 0$.\vspace{1.2mm}

Now inequality \eqref{eq:3.4} implies that $\alpha'(x) \geq 0$ for all $\mathbf{r} \in [0, R_{m,n,t}]$, indicating that $\alpha(x)$ is a non-decreasing function of $x$ on the interval $[0, 1]$. Consequently, we have $\alpha(x) \leq \alpha(1) = 0$ for all $\mathbf{r} \in [0, R_{m,n,t}]$ and $x \in [0, 1]$.\vspace{1.2mm}

\noindent
{\bf Case 2.} Let $t \in (3/4, 1)$. By applying Descartes' Rule of Signs, we observe that the number of sign changes in the coefficients of the polynomial
	\begin{align*}Q_t(\rho) = (4t-3)\rho^{2} -2\rho +1, \quad \rho = n\mathbf{r}^m,
	\end{align*}
	is two. Furthermore, since 
	\begin{align*}
		Q_t(0) =1> 0\; \mbox{and}\;Q_t(1) =4(t-1)< 0,
	\end{align*} the intermediate value theorem guarantees that the smallest positive root $R_{m,n,t}=\sqrt[m]{\frac{1-2\sqrt{1-t}}{n(4t-3)}}$ lies in the interval $(0, 1)$. It follows that $Q_t(n\mathbf{r}^m) \geq 0$ for all $\mathbf{r} \in [0, R_{m,n,t}]$.\vspace{1.2mm}

Therefore inequality \eqref{eq:3.4} implies that $\alpha'(x) \geq 0$ for all $\mathbf{r} \in [0, R_{m,n,t}]$ and $x \in [0, 1]$. Consequently, $\alpha(x)$ is a non-decreasing function of $x\in [0, 1]$. It follows that $\alpha(x) \leq \alpha(1) = 0$ for all $\mathbf{r} \in [0, R_{m,n,t}]$ and so our requirement is established.\vspace{1.2mm}

\noindent
{\bf Case 3.} Let $t=3/4$. Now from \eqref{eq:3.4}, we get $\alpha'(x) \geq -2n{\bf r}^m+1\geq 0$ for $\mathbf{r}\le \sqrt[m]{\frac{1}{2n}}$ and $x \in [0, 1]$. Consequently, $\alpha(x)$ is a non-decreasing function of $x\in [0, 1]$. It follows that $\alpha(x) \leq \alpha(1) = 0$ for all $\mathbf{r}\le \sqrt[m]{\frac{1}{2n}}$ and so our requirement is established.\vspace{1.2mm}

To show that the number $R_{m,n,t}$ is best possible, we let $a\in [0, 1)$ and consider the functions $\omega(z)=(z_1^m,\ldots,z_n^m)$
and 
\begin{align}\label{Eq-44.9}
	f_a(z)=\frac{a-(z_1+\ldots+z_n)}{1-a(z_1+\ldots+z_n)}.
\end{align}
It is easy to verify that $f$ is holomorphic in the polydisk $\mathbb{P}\Delta(0;1/n)$. Since 
\begin{align*}
	|a(z_1+z_2+\ldots+z_n)|<1,
\end{align*}
we see that $f_a$ has the following series expansion
\begin{align*}
	f_a(z)=a-(1-a^2)\sum\limits_{k=1}^{\infty} a^{k-1}(z_1+\cdots+z_n)^k
\end{align*}
for all $z\in\mathbb{P}\Delta(0;1/n)$. For the point 
\begin{align*}
	z=((-1)^{(2m-1)/m}r,(-1)^{(2m-1)/m}r,\ldots,(-1)^{(2m-1)/m}r),
\end{align*}
we find that
\begin{align}\label{BMN1}
	|f_a(\omega(z))|=\frac{a+n{\bf r}^m}{1+na{\bf r}^m}
\end{align}
and
\begin{align}\label{BMN2}
	f_a(\omega(z))=a-(1-a^2)\sum\limits_{k=1}^{\infty} a^{k-1}(-1)^{k(2m-1)}(n{\bf r}^m)^k.
\end{align}

Using (\ref{BMN1}) and (\ref{BMN2}), a simple computation shows that
\begin{align}
&t|f_a(\omega(z)|+(1-t)\sum_{k=0}^{\infty}\sum_{|\alpha|=k}|a_{\alpha}||\omega(z)|^{\alpha} \nonumber\\
&=t\frac{a+n{\bf r}^m}{1+an{\bf r}^m}+(1-t)a
+(1-t)(1-a^2)\frac{n{\bf r}^m}{1-an{\bf r}^m} \nonumber\\
&=\frac{t(a+n{\bf r}^m)(1-an{\bf r}^m)
+(1-t)(1+an{\bf r}^m)\bigl[a(1-an{\bf r}^m)+(1-a^2)n{\bf r}^m\bigr]}
{1-a^2n^2{\bf r}^{2m}}.
\label{eq:3.6}
\end{align}
To establish sharpness, it suffices to show that for any $\mathbf{r} > R_{m,n,t}$, there exists some $a \in [0, 1)$ such that the right-hand side of \eqref{eq:3.6} is strictly greater than $1$, which is equivalent to showing that
\begin{align*}
\mu(a,n,t,{\bf r})>0
\quad \text{for } {\bf r}>R_{m,n,t} \text{ and some } a\in[0,1),
\end{align*}
where
\begin{align*}
\mu(a,n,t,{\bf r})=&
(1-a)\Bigl[2n^2{\bf r}^{2m}(1-t)a^2-\left((2t-1)n^2{\bf r}^{2m}-n{\bf r}^m\right)a+n{\bf r}^m-1\Bigr].
\end{align*}

Let us define the auxiliary function $\rho(a)$ for $a \in [0, 1)$ as
	\begin{align*}
		\rho(a) :=2n^2{\bf r}^{2m}(1-t)a^2-\left((2t-1)n^2{\bf r}^{2m}-n{\bf r}^m\right)a+n{\bf r}^m-1.
	\end{align*}
	Differentiating with respect to $a$, we obtain
	\begin{align*}
		\rho'(a) &= 4n^2{\bf r}^{2m}(1-t)a-(2t-1)n^2{\bf r}^{2m}+n{\bf r}^m.
	\end{align*}
	
Observe that $4n^2{\bf r}^{2m}(1-t)a-(2t-1)n^2{\bf r}^{2m}+n{\bf r}^m$ is a continuous and increasing function of $a\in [0, 1)$ for each
fixed $t\in [0, 1]$ and $n{\bf r}\in [0, 1)$. Consequently $\rho'(a) \geq (1-2t)n^2{\bf r}^{2m}+n{\bf r}^m\geq 0$ for all $t \in [0, 1]$ and $n\mathbf{r} \in [0, 1)$, which implies that $\rho(a)$ is non-decreasing on the interval $[0, 1]$. Consequently, $\rho(a)$ attains its maximum at $a=1$:
	\begin{align*}
		\rho(a) \leq \rho(1) =-\left((4t-3)n^2{\bf r}^{2m}-2n{\bf r}^m+1\right),
	\end{align*}
	for each fixed $t \in [0, 1]$ and $n\mathbf{r} \in [0, 1)$.\vspace{1.2mm}

Furthermore, the monotonicity of $(4t-3)n^2{\bf r}^{2m}-2n{\bf r}^m+1$ leads that if $\mathbf{r} > R_{m,n,t}$, then $(4t-3)n^2{\bf r}^{2m}-2n{\bf r}^m+1<0$. Therefore if $\mathbf{r} > R_{m,n,t}$, then $\rho(1)>0$. By the continuity of $\rho(a)$ on the interval $[0,1]$, it follows that
\begin{align*}
	\lim_{a \to 1^-} \rho(a) = \rho(1) > 0.
\end{align*}
Consequently, for any fixed $\mathbf{r} > R_{m,n,t}$, there exists a sufficiently large $a \in [0, 1)$ such that $\rho(a) > 0$, which implies $\mu(a, n, t, \mathbf{r}) > 0$. This demonstrates that the right-hand side of \eqref{eq:3.6} is strictly greater than $1$ if $\mathbf{r} > R_{m,n,t}$, and thus the radius $R_{m,n,t}$ is sharp.
\end{proof}

\begin{proof}[\bf Proof of Theorem \ref{Th-2.4}]
By the given condition $f(z)=a_0+\sum_{|\alpha|=1}^{\infty} a_{\alpha} z^{\alpha}$ is holomorphic in the polydisk $\mathbb{P}\Delta(0;1_n)$ such that $|f(z)|\leq 1$ in $\mathbb{P}\Delta(0;1_n)$. 
Let $z\in\mathbb{P}\Delta(0;1/n)$ such that $|z_i|=r_i<1/n$ for $i=1,2,\ldots,n$. Let $\omega(z)=(\omega_1(z),\ldots,\omega_n(z))\in\mathcal{B}_{n,m}$.
According to Lemma~\ref{Lem5} and (\ref{Eq-4.9}), we have the estimate
\begin{align}\label{SB1}
	|\partial_u(f(\omega(z))| \leq \sum\limits_{j=1}^n |u_j|\frac{1-|f(\omega(z))|^2}{1-||\omega(z)||_{\infty}^2} \leq \frac{1-|f(\omega(z))|^2}{1-(n\mathbf{r}^m)^2}.
\end{align}

A direct calculation shows that
\begin{align*}
	\frac{n\mathbf{r}^m}{1-(n\mathbf{r}^m)^2} \leq \frac{1}{2} \quad \text{for } 0 \leq n\mathbf{r}^m \leq \sqrt{2}-1.
\end{align*}

Next, consider the auxiliary function
\begin{align*}
	\phi(x) =x +A(1-x^2),
\end{align*}
where $x = |f(\omega(z))|$ and $A= \frac{n\mathbf{r}^m}{1-(n\mathbf{r}^m)^2}$. Again from (\ref{Eq-4.10}), we get
\begin{align*}
x=|f(\omega(z))|\leq \frac{n\mathbf{r}^m + |a_0|}{1 + |a_0|n\mathbf{r}^m}=x_0.
\end{align*}

Since $A\leq 1/2$, using Lemma \ref{Lem6} and (\ref{SB1}), we deduce that
\begin{align}\label{SB0}
|f(\omega(z))|&+|\partial_u(f(\omega(z))|\;||\omega(z)||_{\infty}\\&\le |f(\omega(z))|+\frac{n\mathbf{r}^m}{1-(n\mathbf{r}^m)^2}\left(1-|f(\omega(z))|^2\right)\nonumber\\&=
\phi(x)\nonumber\\&\le \phi(x_0)\nonumber\\&=
\frac{n{\bf r}^m +|a_{0}|}{1+n{\bf r}^m|a_{0}|} + \frac{n{\bf r}^m}{1-(n{\bf r}^m)^{2}}
\left( 1 - \left( \frac{n{\bf r}^m+|a_{0}|}{1+n{\bf r}^m|a_{0}|}\right)^{2} \right).\nonumber
\end{align}

Furthermore, applying Lemma~\ref{Lem3} yields
\begin{align}\label{SBB1}
	\sum_{k=2}^{\infty} \sum_{|\alpha|=k} |a_{\alpha}||\omega(z)|^{\alpha} \leq (1-|a_0|^2) \sum_{k=2}^{\infty} (n\mathbf{r}^m)^k = (1-|a_0|^2) \frac{(n\mathbf{r}^m)^2}{1-n\mathbf{r}^m}.
\end{align}

Substituting the estimates \eqref{SB0}, and \eqref{SBB1} into the main inequality, we obtain
\begin{align}
\mathcal{B}(z,{\bf r})\nonumber&
\le  
\frac{n{\bf r}^m +|a_{0}|}{1+n{\bf r}^m|a_{0}|} + \frac{n{\bf r}^m}{1-(n{\bf r}^m)^{2}}
\left( 1 - \left( \frac{n{\bf r}^m+|a_{0}|}{1+n{\bf r}^m|a_{0}|}\right)^{2} \right)
+ \lambda\frac{(1-|a_{0}|^{2})(n{\bf r}^m)^{2}}{1-n{\bf r}^m}\
\nonumber\\&=
\frac{n{\bf r}^m +|a_{0}|}{1+n{\bf r}^m|a_{0}|}
+ \frac{n{\bf r}^m(1-|a_{0}|^{2})}{(1+n{\bf r}^m|a_{0}|)^{2}}
+ \lambda\frac{(1-|a_{0}|^{2})(n{\bf r}^m)^{2}}{1-n{\bf r}^m}\nonumber\\&
= 1 + \frac{\delta(n{\bf r}^m)}{(1+|a_{0}|n{\bf r}^m)^{2}(1-n{\bf r}^m)},\label{eq:4.1}
\end{align}
for ${\bf r}\le \sqrt[m]{\frac{\sqrt{2}-1}{n}}$, where 
\begin{align*}
&\delta(n{\bf r}^m)\\&
=(1-|a_0|)
\Bigl[(1-n{\bf r}^m)(|a_0|(n{\bf r}^m)^2+2n{\bf r}^m-1)
+\lambda (n{\bf r}^m)^2(1+|a_0|)(1+|a_0|n{\bf r}^m)^2\Bigr].
\end{align*}
\vspace{1.2mm}

To prove the inequality \eqref{eq:4.1}, it is sufficient to prove that $\delta(n{\bf r}^m)\le 0 \quad \text{for}\; {\bf r}\le R_{m,n,\lambda}.$ Observe that
\begin{align*}
\delta(n{\bf r}^m)
&\le (1-|a_0|)
\Bigl[(1-n{\bf r}^m)((n{\bf r}^m)^2+2n{\bf r}^m-1)+2\lambda (n{\bf r}^m)^2(1+n{\bf r}^m)^2\Bigr]
\\&=(1-|a_0|)\,\xi(n{\bf r}^m),
\end{align*}
where
\begin{align*}
\xi(n{\bf r}^m)=2\lambda (n{\bf r}^m)^4+(4\lambda-1)(n{\bf r}^m)^3+(2\lambda-1)(n{\bf r}^m)^2+3n{\bf r}^m-1.
\end{align*}
It is enough to prove that $\xi(n{\bf r}^m)\le 0$ for ${\bf r}\le R_{m,n,\lambda}$.\vspace{1.2mm}

 Next, we divide it into two cases to discuss.\vspace{1.2mm}

\noindent
\textbf{Case 1.} Let $\lambda \in (1/2, \infty)$. We define the auxiliary function $w(n\mathbf{r}^m)$ such that
\begin{align*}
	\xi(n\mathbf{r}^m) > (n\mathbf{r}^m)^4 + (n\mathbf{r}^m)^3 + 3n\mathbf{r}^m - 1 =: w(n\mathbf{r}^m), \quad n\mathbf{r}^m \in [0, 1).
\end{align*}
A direct calculation shows that $\xi(\sqrt{2}-1) > w(\sqrt{2}-1) = 6 - 4\sqrt{2} > 0$. Since $\xi(n\mathbf{r}^m)$ is a monotonically increasing function of $n\mathbf{r}^m$ on $[0, 1)$ and satisfies $\xi(0) = -1 < 0$, the intermediate value theorem guarantees the existence of a unique $\mathbf{r}_\lambda \in \left(0, \sqrt[m]{\frac{\sqrt{2}-1}{n}}\right)$ such that $\xi(n\mathbf{r}_\lambda^m) = 0$. Consequently, we have $\xi(n\mathbf{r}^m) \leq 0$ for all $n\mathbf{r}^m \in [0, n\mathbf{r}_\lambda^m]$.

\medskip
\noindent
\textbf{Case 2.} Let $\lambda \in (0, 1/2]$. In this case, the auxiliary function $\xi(n\mathbf{r}^m)$ satisfies the inequality
\begin{align*}
	\xi(n\mathbf{r}^m) \leq w(n\mathbf{r}^m) := (n\mathbf{r}^m)^4 + (n\mathbf{r}^m)^3 + 3n\mathbf{r}^m - 1, \quad n\mathbf{r}^m \in [0, 1).
\end{align*}
Note that $w(n\mathbf{r}^m)$ is strictly increasing on the interval $[0, 1)$, with $w(0) = -1 < 0$ and $w(\sqrt{2}-1) = 6 - 4\sqrt{2} > 0$. By the intermediate value theorem, there exists a unique root $\mathbf{r}_* \in \left(0, \sqrt[m]{\frac{\sqrt{2}-1}{n}}\right)$ such that $w(n\mathbf{r}_*^m) = 0$. Consequently, $w(n\mathbf{r}^m) \leq 0$ for all $n\mathbf{r}^m \in [0, n\mathbf{r}_*^m]$, which implies that $\xi(n\mathbf{r}^m) \leq 0$ on the same interval.

\medskip
To show that the number $R_{m,n,\lambda}$ is best possible, we let $a\in [0,1)$ and consider the
functions $\omega(z)=(z_1^m,z_2^m\ldots,z_n^m)$ and $f_a$ as given in \eqref{Eq-44.9}. For the point 
\begin{align*}
	z=((-1)^{(2m-1)/m}r,(-1)^{(2m-1)/m}r,\ldots,(-1)^{(2m-1)/m}r),
\end{align*}
 we find from (\ref{BMN1}) and (\ref{BMN2}) that
\begin{align}
|f_a(\omega(z))| &+|\partial_u(f_a(\omega(z)))|\;||\omega(z)||_{\infty} 
+ \lambda \sum_{k=2}^{\infty} \sum_{|\alpha|=k}^{\infty} |a_{\alpha}|\, |\omega(z)|^{\alpha}\nonumber\\=&
\frac{n{\bf r}^m+a}{1+n{\bf r}^ma}
+ \frac{(1-a^{2})n{\bf r}^m}{(1+an{\bf r}^m)^{2}}+\lambda\frac{(1-a^{2})a(n{\bf r}^m)^{2}}{1-an{\bf r}^m}
\nonumber \\
=& \frac{
\bigl(a + n{\bf r}^m\bigr)\bigl(1 - a^{2}(n{\bf r}^m)^{2}\bigr)
+ (1 - a^{2})\Bigl[n{\bf r}^m(1 - a n{\bf r}^m)
+ \lambda a (n{\bf r}^m)^{2}(1 + a n{\bf r}^m)^{2}\Bigr]
}{
(1 + a n{\bf r}^m)(1 - a^{2} (n{\bf r}^m)^{2})
}.\label{eq:3.14}
\end{align}
To establish the sharpness of the result, it remains to show that for any $\mathbf{r} > R_{m,n,\lambda}$, there exists a parameter $a \in [0, 1)$ such that the right-hand side of \eqref{eq:3.14} exceeds unity. This is equivalent to demonstrating that
\begin{align}\label{eq:3.15}
&(1-a)\Big[\lambda (n{\bf r}^m)^4a^4+(\lambda (n{\bf r}^m)^4+2\lambda (n{\bf r}^m)^3)a^3
+\bigl((2\lambda-1)(n{\bf r}^m)^3+\lambda (n{\bf r}^m)^2\bigr)a^2\nonumber \\
&\quad+\bigl((\lambda-1)(n{\bf r}^m)^2+n{\bf r}^m\bigr)a+2n{\bf r}^m-1\Big]>0
\end{align}
 ${\bf r}>R_{m,n,\lambda}$.\vspace{1.2mm} 
 
For $a \in [0, 1)$, we define the auxiliary function $K(a, \lambda, n\mathbf{r}^m)$ as follows:
 	\begin{align*}
 		K(a, \lambda, n\mathbf{r}^m) &:= \lambda (n\mathbf{r}^m)^4 a^4 + \left[ \lambda (n\mathbf{r}^m)^4 + 2\lambda (n\mathbf{r}^m)^3 \right] a^3 + \left[ (2\lambda - 1)(n\mathbf{r}^m)^3 + \lambda (n\mathbf{r}^m)^2 \right] a^2 \\
 		&\quad + \left[ (\lambda - 1)(n\mathbf{r}^m)^2 + n\mathbf{r}^m \right] a + 2n\mathbf{r}^m - 1.
 \end{align*}
 To establish the sharpness of the radius $R_{n,\lambda}$, it remains to demonstrate that for $n\mathbf{r} > R_{n,\lambda}$, there exists an $a \in [0, 1)$ such that $K(a, \lambda, n\mathbf{r}) > 0$. To this end, we consider the following cases.

\medskip
\noindent
\textbf{Case 1.}  Let $\lambda \in (1/2, \infty)$. For each fixed $n\mathbf{r}^m \in [0, 1)$, it is evident that $K(a, \lambda, n\mathbf{r}^m)$ is a monotonically increasing function of $a \in [0, 1)$. Thus, for $a \in [0, 1)$, we have the estimate
\begin{align*}
	K(a, \lambda, n\mathbf{r}^m)& \leq K(1, \lambda, n\mathbf{r}^m)\\ &= 2\lambda (n\mathbf{r}^m)^4 + (4\lambda - 1)(n\mathbf{r}^m)^3 + (2\lambda - 1)(n\mathbf{r}^m)^2 + 3n\mathbf{r}^m - 1= \xi(n\mathbf{r}^m).
\end{align*}
Recall that $\xi(n\mathbf{r}^m)$ is monotonically increasing on $[0, 1)$ and $\xi(n\mathbf{r}_\lambda^m) = 0$. Consequently, if $n\mathbf{r}^m > n\mathbf{r}_\lambda^m$, it follows that $\xi(n\mathbf{r}^m)>0$. By the continuity of $K$ with respect to $a$, we observe that
\begin{align*}
	\lim_{a \to 1^-} K(a, \lambda, n\mathbf{r}^m) = K(1, \lambda, n\mathbf{r}^m) = \xi(n\mathbf{r}^m) > 0
\end{align*}
for $n\mathbf{r}^m > n\mathbf{r}_\lambda^m$. This ensures that for any $n\mathbf{r}^m > n\mathbf{r}_\lambda^m$, there exists a sufficiently large $a \in [0, 1)$ such that $K(a, \lambda, n\mathbf{r}^m)>0$, which implies that the inequality \eqref{eq:3.15} is satisfied.

\medskip
\noindent
\textbf{Case 2.} Let $\lambda \in (0, 1/2]$. In this case, for $a \in [0, 1)$, the auxiliary function $K$ satisfies the following inequality:
\begin{align*}
	K(a, \lambda, n\mathbf{r}^m) \leq K(a, 1/2, n\mathbf{r}^m) \leq (n\mathbf{r}^m)^4 + (n\mathbf{r}^m)^3 + 3n\mathbf{r}^m - 1 = w(n\mathbf{r}^m).
\end{align*}
The strict monotonicity of $w(n\mathbf{r}^m)$ on $[0, 1)$ implies that $w(n\mathbf{r}^m)>0$ whenever $n\mathbf{r}^m>n\mathbf{r}_*^m$, where $n\mathbf{r}_*^m$ is the unique positive root of $w(n\mathbf{r}^m)=0$. It follows from the definition of $K$ that
\begin{align*}
	\lim_{a \to 1^-} K(a, \lambda, n\mathbf{r}^m) = w(n\mathbf{r}^m) > 0
\end{align*}
for $n\mathbf{r}^m > n\mathbf{r}_*^m$. By the continuity of $K$ with respect to $a$ on $[0, 1]$, for any $n\mathbf{r}^m> n\mathbf{r}_*^m$, we can find a sufficiently large $a \in [0, 1)$ such that $K(a, \lambda, n\mathbf{r}^m) > 0$. This confirms that the inequality \eqref{eq:3.15} is satisfied, thereby establishing the sharpness of the radius $\mathbf{r}_*^m$.
\end{proof}

\begin{proof}[\bf Proof of Theorem \ref{Th-2.5}]
By the given condition $f(z)=a_0+\sum_{|\alpha|=1}^{\infty} a_{\alpha} z^{\alpha}$ is holomorphic in the polydisk $\mathbb{P}\Delta(0;1_n)$ such that $|f(z)|\leq 1$ in $\mathbb{P}\Delta(0;1_n)$. 
Let $z\in\mathbb{P}\Delta(0;1/n)$ such that $|z_i|=r_i<1/n$ for $i=1,2,\ldots,n$. Let $\omega(z)=(\omega_1(z),\ldots,\omega_n(z))\in\mathcal{B}_{n,m}$.

A direct calculation shows that
\begin{align*}
	\frac{n\mathbf{r}^m}{1-(n\mathbf{r}^m)^2} \leq 1 \quad \text{for } 0 \leq n\mathbf{r}^m \leq \frac{\sqrt{5}-1}{2}.
\end{align*}

Next, consider the auxiliary function
\begin{align*}
	\phi(x) =x^2 +A(1-x^2),
\end{align*}
where $x = |f(\omega(z))|$ and $A= \frac{n\mathbf{r}^m}{1-(n\mathbf{r}^m)^2}$. Again from (\ref{Eq-4.10}), we get
\begin{align*}
x=|f(\omega(z))|\leq \frac{n\mathbf{r}^m + |a_0|}{1 + |a_0|n\mathbf{r}^m}=x_0.
\end{align*}

Since $A\leq 1/2$, using Lemma \ref{Lem6} and (\ref{SB1}), we deduce that
\begin{align}\label{SBB0}
|f(\omega(z))|^2&+|\partial_u(f(\omega(z))|\;||\omega(z)||_{\infty}\\&\le |f(\omega(z))|^2+\frac{n\mathbf{r}^m}{1-(n\mathbf{r}^m)^2}\left(1-|f(\omega(z))|^2\right)\nonumber\\&=
\phi(x)\nonumber\\&\le \phi(x_0)\nonumber\\&=
\left(\frac{n{\bf r}^m +|a_{0}|}{1+n{\bf r}^m|a_{0}|}\right)^2 + \frac{n{\bf r}^m}{1-(n{\bf r}^m)^{2}}
\left( 1 - \left( \frac{n{\bf r}^m+|a_{0}|}{1+n{\bf r}^m|a_{0}|}\right)^{2} \right).\nonumber
\end{align}

Substituting the estimates \eqref{SBB1} and \eqref{SBB0}  into the main inequality, we obtain
\begin{align}\label{DP1}
\mathcal{C}(z,{\bf r})\nonumber&
\le  
\left(\frac{n{\bf r}^m +|a_{0}|}{1+n{\bf r}^m|a_{0}|}\right)^2 + \frac{n{\bf r}^m}{1-(n{\bf r}^m)^{2}}
\left( 1 - \left( \frac{n{\bf r}^m+|a_{0}|}{1+n{\bf r}^m|a_{0}|}\right)^{2} \right)
+ \lambda\frac{(1-|a_{0}|^{2})(n{\bf r}^m)^{2}}{1-n{\bf r}^m}\
\nonumber\\&=
\left(\frac{n{\bf r}^m +|a_{0}|}{1+n{\bf r}^m|a_{0}|}\right)^2
+ \frac{n{\bf r}^m(1-|a_{0}|^{2})}{(1+n{\bf r}^m|a_{0}|)^{2}}
+ \lambda\frac{(1-|a_{0}|^{2})(n{\bf r}^m)^{2}}{1-n{\bf r}^m}\nonumber\\&
= 1 + \frac{(1-|a_0|)\Psi(|a_0|, n{\bf r}^m,\lambda)}{(1+|a_{0}|n{\bf r}^m)^{2}(1-n{\bf r}^m)}
\end{align}
for ${\bf r}\le \sqrt[m]{\frac{\sqrt{5}-1}{2n}}$, where 
\begin{align*}
\Psi(|a_0|, n{\bf r}^m,\lambda)= n^4{\bf r}^{4m}\lambda |a_0|^{2} + 2n^3{\bf r}^{3m}\lambda |a_0| - n^3{\bf r}^{3m} + n^2{\bf r}^{2m}\lambda + 2n{\bf r}^m - 1.
\end{align*}

Observe that $\Psi(|a_0|, n{\bf r}^m,\lambda)$ is a monotonically increasing function of  $|a_0| \in [0,1)$ for each fixed $\lambda \in [0,\infty)$ and $n{\bf r} \in (0,1)$. Then we have

\begin{align*}
 \Psi(|a_0|, n{\bf r}^m,\lambda) \le \Psi(1, n{\bf r}^m,\lambda)
=n^4{\bf r}^{4m}\lambda+ 2n^3{\bf r}^{3m}\lambda- n^3{\bf r}^{3m} + n^2{\bf r}^{2m}\lambda + 2n{\bf r}^m - 1.
\end{align*}

It is enough to prove that $\Phi(1,n{\bf r}^m,\lambda) \le 0$ holds for  ${\bf r} \le \tilde R_{m,n,\lambda}$.
Next, we divide it into two cases to discuss.\vspace{1.2mm}

\noindent
\textbf{Case 1.} Let $\lambda \in (1,\infty)$. Clearly $\Psi(1,n{\bf r}^m, \lambda)$ is a continuous and increasing function of $n{\bf r}^m \in (0,1)$. Note that
\begin{align*}
\Psi(1,0,\lambda)= -1< 0\quad \text{and}\quad \Psi\left(1,\sqrt[m]{\frac{\sqrt{5}-1}{2n}},\lambda\right)=\lambda > 0.
\end{align*}

Intermediate value theorem guarantees the existence of a unique $\tilde {\bf r}_{\lambda}\in \left(0,\sqrt[m]{\frac{\sqrt{5}-1}{2n}}\right)$ such that $\Psi(1,n\tilde {\bf r}_{\lambda}^m,\lambda)=0$. Consequently, we have $\Psi(1,n{\bf r}^m,\lambda)\le 0$ for all $n{\bf r}^m\in [0,n\tilde {\bf r}_{\lambda}^m]$. 

\medskip
\noindent
\textbf{Case 2.} Let $\lambda \in [0,1]$. Note that
\begin{align*}
\Psi(1,n{\bf r}^m,\lambda) \le n^4{\bf r}^{4m} + n^3{\bf r}^{3m} + n^2{\bf r}^{2m} + 2n{\bf r}^m - 1.
\end{align*}

Let $S(n{\bf r}^m)= n^4{\bf r}^{4m} + n^3{\bf r}^{3m} + n^2{\bf r}^{2m} + 2n{\bf r}^m - 1.$ Clearly, $S(n{\bf r}^m)$ is a continuous and increasing function of $n{\bf r}^m \in (0,1)$. Note that
\begin{align*}
S(0) = -1<0\quad \text{and}\quad S\left(\sqrt[m]{\frac{\sqrt{5}-1}{2n}}\right) = 1 > 0.
\end{align*}
Intermediate value theorem guarantees the existence of a unique $\tilde {\bf r}_{*}\in \left(0,\sqrt[m]{\frac{\sqrt{5}-1}{2n}}\right)$ such that $S(n\tilde {\bf r}_{*}^m)=0$. Consequently, we have $S(n{\bf r}^m)\le 0$ for all $n{\bf r}^m\in [0,n\tilde {\bf r}_{*}^m]$ and so
\begin{align*}
\Psi(1,n{\bf r}^m, \lambda) \le S(n{\bf r}^m) \le 0 \quad \text{for } n{\bf r}^m\in [0,n\tilde {\bf r}_{*}^m].
\end{align*}
Hence, inequality (\ref{DP1}) holds for ${\bf r}\le \tilde R_{m,n,\lambda}$.\vspace{1.2mm}

The sharpness part is similar to Theorem \ref{Th-2.5} and so we omit it.
\end{proof}

\noindent\textbf{Conflict of interest:} The authors declare that there is no conflict  of interest regarding the publication of this paper.\vspace{1.2mm}

\noindent {\bf Funding:} Not Applicable.\vspace{1.2mm}

\noindent\textbf{Data availability statement:}  Data sharing not applicable to this article as no datasets were generated or analysed during the current study.\vspace{1.2mm}

\noindent {\bf Authors' contributions:} All the authors have equal contributions in preparation of the manuscript.

\end{document}